\newtheorem{thm}{Theorem}[section]
\newtheorem{lm}{Lemma}[section]
\newcommand {\ud}{\mathrm{d}}
\newcommand {\E} {E}
\newcommand {\A} {\alpha}
\newcommand {\pa} {\partial }
\newcommand{\les}{\lesssim}
\newcommand\argmin[1]{\underset{#1}{\operatorname{argmin}}}
\newcommand\ie{i.e.,\xspace}
\newcommand\OU{OU$_{no.chg}$ process}
\newcommand\OUchg{OU$_{chg}$ process}
\newcolumntype{Y}{>{\small\raggedright\arraybackslash}X}
\renewcommand{\arraystretch}{1.2}
\title{Robust test for dispersion parameter change\\ in discretely observed diffusion processes}
\author{Junmo Song\thanks{Department of Statistics, Kyungpook National University, 80 Daehakro, Bukgu, Daegu, 41566, Korea. Email:
jsong@knu.ac.kr}}
\affil{Department of Statistics, Kyungpook National University}
\date{}
\begin{document}
\maketitle

\begin{abstract}
This paper deals with the problem of testing for dispersion parameter change in discretely
observed diffusion processes when the observations are contaminated by outliers. To lessen the impact of outliers, we first calculate residuals using a robust estimate and then propose a trimmed-residual based CUSUM test. The proposed test is shown to converge weakly to a function of the Brownian bridge under the null hypothesis of no parameter change.  We conduct simulations to evaluate performances of the proposed test in the presence of outliers. Numerical results confirm that the proposed test posses a strong robust property against outliers. In real data analysis, we fit the Ornstein-Uhlenbeck process to KOSPI200 volatility index data and locate some change points that are not detected by a naive CUSUM test.
\end{abstract}
\noindent{\bf Key words and phrases}: Diffusion processes,  parameter change test, dispersion parameter, outliers, CUSUM of squares test, robust test.

\section{Introduction}
\label{sec:intro}
Diffusion processes are usually expressed as solutions to stochastic differential equations (SDEs). Since SDEs are useful in describing stochastic phenomena, diffusion processes have long been popular in various fields.
In the field of finance, for example, the processes have been widely used to model the prices of underlying assets and instantaneous interest rates. Naturally, the need for statistical inference on diffusion processes has increased. In particular, estimation of discretely observed diffusion processes has attracted  much attention.  See  Dacunha-Castelle and Florens-Zmirou (1986), Kessler (1997), A\"{\i}t-Sahalia (2002), and Beskos et al (2009). Statistical testings such as parameter change test and specification test have also investigated by some authors. See, for example,  Iacus and Yoshida (2012) and Chen et al. (2008).

In this study, we are concerned with change point problem in diffusion processes.  It is well known that ignoring changes can lead to false  inference. Hence, change point problem has received a great deal of attention from researchers and practitioners. See the recent review paper by  Horváth and Rice (2015). For diffusion processes, Gregorio and Iacus (2008), Song and Lee (2009), Lee (2011), and Iacus and Yoshida (2012) investigated the problem of testing for dispersion parameter constancy in discretely observed diffusion processes.
Since  the dispersion parameter is closely related to the volatility of underlying assets and  the volatility plays a crucial role in pricing financial derivatives, the exact inference on the dispersion parameter is  particularly important in financial applications.  In the cases where a continuous observation is assumed to be obtained, detection of drift parameter change is typically considered because dispersion coefficient can be exactly estimated in this framework. See, for example, Negri and Nishiyama (2012) and Tsukuda (2017).  

This paper focuses on the problem of detecting the dispersion parameter change, particularly when  a data set includes deviating observations. In the literature, deviating observations are commonly treated as jumps or outliers.  In the former cases, stochastic models  with  jump terms, usually  induced by Poisson processes, have been proposed to describe spiky observations.  See, for example, Kou (2002). In the latter cases, on the other hand, various robust methods for reducing the effect of outliers have been developed. For an overview on this area, we refer the reader to Maronna et al. (2006). In this study, we deal with the outlying observations from the latter point of view.

As is widely recognized, statistical inference such as estimation and testing are unduly influenced by outliers. Recently, Lee and Song (2013) and Song (2017) addressed that estimation of diffusion processes tends to be severely damaged by a small portion of outliers, particularly when sampling interval is short, as in high-frequency sampling cases. This is largely due to the fact that the transition distribution of the diffusion process approaches Gaussian distribution as the sampling interval gets shorter. It should be noted that  tests constructed using an estimator sensitive to outliers are likely to lead to false conclusions.
Furthermore,  such events that can cause deviating observations or parameter changes in fitted model are often observed in actual practice. In finance, changes of monetary policy and critical social events can be examples.  When outlying observations are included in a data set being suspected of  having parameter changes, it is not easy to determine whether the testing results are due to genuine changes or not. These technical and empirical reasons motivate us to consider the problem.


The objective of  this  paper is to propose a parameter change test that is robust against outliers.  We introduce  a very intuitive and easy-to-implement test procedure: to lessen the impact of outliers on the procedure, (i) we first calculate residuals using a robust estimate and truncate the squares of the obtained residuals; (ii) then, we construct a CUSUM test using the trimmed ones. As a robust estimator, we employ the minimum density power divergence estimator (MDPDE) for diffusion processes introduced by Lee and Song (2013). Our simulation study below shows that the proposed test has strong robustness against outliers, whereas a naive CUSUM test without  any robust procedure is seriously compromised by outliers. Further, our real data application demonstrates that  analysis incorporating the proposed test can improve forecasting performance.

The rest of the paper is organized as follows. In Section 2, we introduce the residual-based CUSUM test and the MDPDE for diffusion processes. Then, we propose a robust CUSUM test for parameter change and derive its asymptotic null distribution. In Section 3, we conduct a simulation study to investigate the finite sample performance. Section 4 illustrates a real data application to KOSPI200 volatility index. Section 5 concludes and technical proofs are given in Section 6.

\section{Main Result}
Let us consider the following time-homogeneous diffusion process $\{X_t |  t \geq 0\}$ defined by
\begin{eqnarray}\label{DP}
dX_t = a(X_t,\theta) dt+\sigma dW_t,~~~X_0=x_0,
\end{eqnarray}
where $(\theta, \sigma) \in \mathbb{R}^p \times \mathbb{R}^+$ is unknown parameter and $\{ W_t | t \geq 0 \}$ denotes the
standard Wiener process. The real valued function $a$ is assumed to be known apart from $\theta$ and smooth
enough to admit a unique solution. We assume that a sample $\{X_{t_i} | 0\leq i \leq n\}$ is discretely observed, where $t_i=ih_n$ and $\{h_n\}$ is a sequence of positive numbers with $h_n \rightarrow 0$ and $nh_n \rightarrow \infty$.
It is noteworthy that the diffusion processes of the form
$dX_t =a(X_t,\theta)dt + \sigma\, b(X_t)dW_t$ can be reduced to (\ref{DP}) by using the Lamperti transformation and Ito's lemma.

\subsection{Naive CUSUM test for diffusion processes}
Based on the discrete observations, we now wish to test the following hypotheses:
\begin{eqnarray*}
&& H_0:\ \sigma\textrm{~~does not change over~} 0\leq t \leq nh_n\,.\quad \textrm{vs}.\quad H_1:\ \textrm{not}\ \ H_0\,.
\end{eqnarray*}
For this, we employ the CUSUM of squares test based on residuals as in Lee (2011). Residuals for the diffusion process (\ref{DP}) are defined as follows:
\begin{eqnarray}\label{res}
\hat{Z}_i:=\frac{X_{t_i}-X_{t_{i-1}}-a(X_{t_{i-1}}, \hat{\theta}_n)h_n}{\sqrt{h_n}\hat{\sigma}_n},\quad i=1,\cdots,n,
\end{eqnarray}
where $(\hat\theta_n,\hat\sigma_n)$ is an estimate of  $(\theta,\sigma)$. The above is deduced from the following Euler approximation  of (\ref{DP})
\begin{eqnarray}\label{Euler}
 X_{t_i} \approx X_{t_{i-1}}+a(X_{t_{i-1}},\theta)h_n +\sigma\sqrt{h_n}\frac{W_{t_i}-W_{t_{i-1}}}{\sqrt{h_n}}.
\end{eqnarray}
Using the residuals, we first introduce the CUSUM of squares statistics:
\begin{eqnarray*}
T_n:=\frac{1}{\sqrt{n}\hat{\tau}_n} \max_{1\leq k\leq n } \Big|\sum_{i=1}^k \hat{Z}^2_{i} -\frac{k}{n}\sum_{i=1}^n\hat{Z}_{i}^2\Big|
\end{eqnarray*}
where $\displaystyle \hat{\tau}_n^2=\frac{1}{n}\sum_{i=1}^n\hat{Z}_{i}^4-\Big(\frac{1}{n}\sum_{i=1}^n\hat{Z}_{i}^2\Big)^2$.

In order to establish the limiting null distribution of $T_n$, the following regularity conditions are required. We assume that the true parameter $(\theta_0, \sigma_0)$ belongs to the parameter space $\Theta$, which is a bounded subset of $\mathbb{R}^p\times[c,\infty)$ for some $c>0$.

\begin{enumerate}
\item[\bf A0.] The estimator $(\hat{\theta}_n, \hat{\sigma}_n)$ satisfies that $\sqrt{nh_n}||\hat{\theta}_n-\theta_0||=O_P(1)$ and
  $\sqrt{n}|\hat{\sigma}_n-\sigma_0|=O_P(1)$.
\item[\bf A1.]There exists a constant $C>0$ such that $|a(x,\theta_0)-a(y,\theta_0)| \leq C|x-y|$ for any $x, y \in \mathbb{R}$.
\item[\bf A2.]The process $X$  from (\ref{DP}) is ergodic with its invariant measure $\mu_0$ such that $\int
x^k\ud\mu_0(x)<\infty$\;for all\;$k\geq 0$.
\item[\bf A3.]$\sup_t E |X_t|^k < \infty$ for all $k\geq0$.
\item[\bf A4.]The function $a$ is continuously differentiable with respect to $x$
for all $\theta$ and the derivatives belong to $\mathcal{P}:=\{f(x, \theta)\big|\ |f| \leq C( 1+|x|^C)\ \textrm{for some }\ C>0\},$ where $C$ does not depend on the parameter.
\end{enumerate}

\noindent Then, from Lemma \ref{L3} with $S_n=\mathbb{R}$, we can obtain the following result.
\begin{thm}\label{T1}
Assume that {\bf A0} -- {\bf A4} hold. If $nh_n^2\rightarrow 0$, then under $H_0$,
\begin{eqnarray*}
T_n
\stackrel{d}{\rightarrow}\sup_{0\leq t \leq 1} |W^o_t|\quad as\quad n\rightarrow \infty,
\end{eqnarray*}
where $\{W_t^o | t\geq 0\}$ denotes a standard Brownian bridge.
\end{thm}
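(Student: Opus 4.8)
The plan is to view $T_n$ as the sup-functional of a CUSUM-of-squares process built from the true Gaussian innovations and to show that neither the Euler discretization nor the plug-in of $(\hat\theta_n,\hat\sigma_n)$ affects the limit; this is exactly the content of Lemma~\ref{L3} with $S_n=\mathbb{R}$ (no trimming), so I describe the mechanism in that special case. Using \eqref{DP}, decompose the increment as $X_{t_i}-X_{t_{i-1}} = a(X_{t_{i-1}},\theta_0)h_n + \sigma_0\sqrt{h_n}\,\xi_i + R_i$, where $\xi_i := (W_{t_i}-W_{t_{i-1}})/\sqrt{h_n}$ are \emph{exactly} i.i.d.\ $N(0,1)$ and $R_i := \int_{t_{i-1}}^{t_i}\big(a(X_s,\theta_0)-a(X_{t_{i-1}},\theta_0)\big)\,ds$ is the Euler remainder. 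By \textbf{A1} and \textbf{A3} (Lipschitz drift plus moment bounds for diffusion increments), $R_i$ is of order $h_n^{3/2}$ in $L^2$, while $E[\xi_i R_i\mid\G]$ is of the same order and generates the familiar $O(h_n)$ Euler bias; the hypothesis $nh_n^2\to0$ is precisely what makes $\sqrt n\,h_n\to0$, so this bias is negligible at the $n^{-1/2}$ scale of the CUSUM.

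Substituting the decomposition into \eqref{res} yields $\hat Z_i = \tfrac{\sigma_0}{\hat\sigma_n}\xi_i + \tfrac{R_i}{\sqrt{h_n}\,\hat\sigma_n} - \tfrac{\sqrt{h_n}}{\hat\sigma_n}\big(a(X_{t_{i-1}},\hat\theta_n)-a(X_{t_{i-1}},\theta_0)\big)$, hence $\hat Z_i^2 = \tfrac{\sigma_0^2}{\hat\sigma_n^2}\,\xi_i^2 + \rho_{i,n}$ for an explicit remainder $\rho_{i,n}$. The first key step is to prove $\tfrac{1}{\sqrt n}\max_{1\le k\le n}\big|\sum_{i=1}^k\rho_{i,n} - \tfrac kn\sum_{i=1}^n\rho_{i,n}\big| = o_P(1)$. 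The $R_i$-contributions are dispatched by the $L^2$ bound above together with $nh_n^2\to0$. For the drift-estimation contributions I would Taylor-expand $a(X_{t_{i-1}},\hat\theta_n)-a(X_{t_{i-1}},\theta_0)$ about $\theta_0$ so that the leading factor $\partial_\theta a(X_{t_{i-1}},\theta_0)$ is $\G$-measurable; then $\sum_{i=1}^k\xi_i\,\partial_\theta a(X_{t_{i-1}},\theta_0)$ is a martingale whose quadratic variation is $O_P(n)$ by \textbf{A2}--\textbf{A4} and the independence of $\xi_i$ from the past, so Doob's maximal inequality yields a uniform-in-$k$ bound of order $\sqrt n$; multiplying by the $\sqrt{h_n}$ prefactor and by $\|\hat\theta_n-\theta_0\|=O_P((nh_n)^{-1/2})$ from \textbf{A0} leaves a term of order $n^{-1/2}$. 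The Taylor remainder and the deviation of $\hat\sigma_n$ from $\sigma_0$ are absorbed in the same way, using the polynomial growth in \textbf{A4} together with \textbf{A2}--\textbf{A3}. Finally $\sigma_0^2/\hat\sigma_n^2\stackrel{p}{\to}1$ by \textbf{A0}.

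The remaining steps are routine. Since $\xi_i^2-1$ is i.i.d.\ with mean $0$ and variance $2$, Donsker's theorem gives $\tfrac{1}{\sqrt n}\sum_{i=1}^{\lfloor nt\rfloor}(\xi_i^2-1)\Rightarrow\sqrt2\,W_t$ in $D[0,1]$; subtracting $t$ times the terminal value and feeding in the first key step yields $\tfrac{1}{\sqrt n}\big(\sum_{i=1}^{\lfloor nt\rfloor}\hat Z_i^2 - \tfrac{\lfloor nt\rfloor}{n}\sum_{i=1}^n\hat Z_i^2\big)\Rightarrow\sqrt2\,W^o_t$. In the same way the laws of large numbers $\tfrac1n\sum\xi_i^2\to1$ and $\tfrac1n\sum\xi_i^4\to3$ give $\hat\tau_n^2\stackrel{p}{\to}2$ (in fact the nuisance factor $\sigma_0^2/\hat\sigma_n^2$ cancels between the numerator of $T_n$ and $\hat\tau_n$, so consistency of $\hat\sigma_n$ is not even required). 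The functional $x\mapsto\sup_{0\le t\le1}|x(t)|$ is continuous for the uniform topology and the limit has continuous paths, so the continuous mapping theorem and Slutsky's lemma give $T_n\stackrel{d}{\to}\tfrac{1}{\sqrt2}\sup_{0\le t\le1}\!\big|\sqrt2\,W^o_t\big| = \sup_{0\le t\le1}|W^o_t|$.

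The main obstacle is the first key step: the estimators $\hat\theta_n$ and $\hat\sigma_n$ are sample-wide quantities, which breaks the martingale structure one would like to exploit, and the Euler remainders carry a genuine $O(h_n)$ bias. The fix — expand in $\theta$ to expose a $\G$-measurable leading term controllable by Doob's inequality, use the polynomial-growth bound \textbf{A4} and the moment bounds \textbf{A2}--\textbf{A3} to mop up remainders, and invoke $nh_n^2\to0$ exactly to suppress the discretization bias — is what makes the $O_P$-rates postulated in \textbf{A0} precisely sufficient, and it is the technical core of Lemma~\ref{L3}.
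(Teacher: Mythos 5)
Your proposal is correct and follows essentially the same route as the paper, which proves Theorem \ref{T1} by invoking Lemma \ref{L3} with $S_n=\mathbb{R}$: the same decomposition of $\hat Z_i^2-Z_i^2$ into a scale factor, a drift-estimation cross term, and Euler-remainder terms, the same use of $\sqrt{n}h_n\to0$ to kill the discretization bias, and the same Donsker-plus-continuous-mapping conclusion with $\hat\tau_n^2\stackrel{P}{\to}2$. The only cosmetic difference is that you control the drift cross term via Doob's maximal inequality where the paper leans on its a.s.\ convergence lemmas, which changes nothing of substance.
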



\subsection{ Robust CUSUM test for diffusion processes}
Now, we consider the situation where the observations are contaminated by outliers. It is well known that estimators using a Gaussian quasi-likelihood are strongly influenced by outliers. Also, it should be recalled that many estimation methods for diffusion
processes employ the maximum likelihood (ML) technique based on an approximated transition density; see, for example, Li (2013) and the papers therein.
 As aforementioned in the Introduction, since the transition distributions of the diffusion processes get close to the normal distribution as the sampling interval approaches zero, such estimators using ML methods are likely to produce biased estimates in the presence of outliers. Therefore, it can be naturally surmised that the residuals calculated from those biased estimates will not behave like ideal residuals, that is, i.i.d. random variables, and subsequently lead to a distortion of $T_n$.

In order to remedy the problem,  a robust estimator is first used to lessen the effect of outliers on parameter estimation. Next, note that  even if the parameters are properly estimated by a robust estimation method, the residuals corresponding to outliers still deviate from normal range. Thus, they need to be  truncated to prevent damaging the test procedure. In this study, we employ the MDPDE as a robust estimator. Further, in order to avoid some technical problems in the proofs, we use the trimmed ones of the squared residuals to construct test statistics in stead of using the squares of the truncated residuals.

Lee and Song (2013) introduced  a robust estimator for diffusion processes (\ref{DP}) using the density power divergence by Basu et al. (1998), and demonstrated in the simulation study that the estimator has a strong robust property with little loss
in asymptotic efficiency relative to the ML estimator (MLE). The MDPDE for (\ref{DP}) is defined as
\begin{eqnarray}\label{MDPDE}
(\hat{\theta}_n^\A, \hat{\sigma}_n^\A) = \argmin{(\theta,\sigma)\in\Theta}
\,\,\frac{1}{n}\sum_{i=1}^n H_i^\A(\theta,\sigma)\,,
 \end{eqnarray}
where {\setlength\arraycolsep{2pt}
\begin{eqnarray*}
&&H_i^\A(\theta,\sigma)\\
&&=\!\left\{\begin{array}{ll}
\displaystyle\frac{1}{\sigma^\A}
\Big[\frac{1}{\sqrt{1+\alpha}}- \Big(1+\frac{1}{\alpha}\Big)\,\exp\Big\{-\frac{\alpha}{2\sigma^2h_n} (X_{t_i}-X_{t_{i-1}}-a(X_{t_{i-1}},\theta)h_n)^2\Big\}\Big]&,\A>0\,,\\
 \\
\displaystyle \frac{1}{\sigma^2h_n}\big(X_{t_i}-X_{t_{i-1}}-a(X_{t_{i-1}},\theta)h_n\big)^2+\log \sigma^2&, \A=0\,.
 \end{array}\right.
 \end{eqnarray*}}
\noindent Here, the tuning parameter $\alpha$ controls the
trade-off between robustness and efficiency in the estimation procedure.  Note that the estimator with $\A=0$ becomes the Gaussian quasi-MLE. For more details on the MDPDE and its properties, see Basu et al. (1998).

We consider the following functions to truncate the residuals: for a given positive number $M$ and $x \geq 0$,
\begin{eqnarray*}
f_{1,M}(x)&=&x\ 1_{[0,M]}(x) + M\ 1_{(M,\infty]}(x), \\
f_{2,M}(x)&=&x\ 1_{[0,M]}(x)+ (2M-x)\ 1_{(M,2M]}(x),
\end{eqnarray*}
where  $1_A(x)$ is the indicator function of the set $A$. Using the trimming function, we now propose a
CUSUM test as follows: for $j=1,2$,
\begin{eqnarray*}
T_{j,n}^\A:=\frac{1}{\sqrt{n}\hat{\tau}_{j,n}} \max_{1\leq k\leq n } \Big|\sum_{i=1}^k f_{j,M}(\hat{Z}^2_{\A,i}) -\frac{k}{n}\sum_{i=1}^nf_{j,M}(\hat{Z}^2_{\A,i})\Big|,
\end{eqnarray*}
where $\hat{Z}_{\A,i}$'s are the ones calculated from (\ref{res}) using the MDPD estimate $(\hat{\theta}_n^\A, \hat{\sigma}_n^\A)$ and $\displaystyle\hat{\tau}_{j,n}^2=\frac{1}{n}\sum_{i=1}^nf_{j,M}^2(\hat{Z}_{\A,i}^2)-\Big(\frac{1}{n}\sum_{i=1}^nf_{j,M}(\hat{Z}_{\A,i}^2)\Big)^2$.

To derive the asymptotic null distribution of $T_{j,n}^\A$, $\sqrt{nh_n}||\hat{\theta}_n^\A-\theta_0||=O_P(1)$ and $\sqrt{n}|\hat{\sigma}_n^\A-\sigma_0|=O_P(1)$ are required.
For this, we additionally assume the following conditions to ensure the stochastic boundedness (cf. see Lee and Song (2013)).
\begin{enumerate}
\item[\bf A5.] $\Theta$ is convex compact and $(\theta_0, \sigma_0)$ lies in the interior of $\Theta$.
\item[\bf A6.] The function $a$ and all its $x$-derivatives are three times differentiable with respect to $\theta$ for all $x$.  Moreover, these derivatives up to the third order with
respect to $\theta$ belong to $\mathcal{P}$.
\item[\bf A7.] If $\mu_0(a(x,\theta)=a(x,\theta_0))=1$, then $
\theta=\theta_0$.
\item[\bf A8.] $\int \pa_\theta a(x,\theta_0)\,\pa_{\theta^T}a(x,\theta_0)
d\mu_0(x)$ is positive definite, where $\pa_\theta a =\pa a/\pa\theta$.
\end{enumerate}
The following theorem is the main result of this paper. The proposed test has the same limiting null distribution as $T_n$.
\begin{thm}\label{T2}
Assume that {\bf A1}-{\bf A8} hold. For each $\A \geq 0$, if $nh_n^2\rightarrow 0$, then under $H_0$,
\begin{eqnarray*}
T_{j,n}^\A
\stackrel{d}{\rightarrow}\sup_{0\leq t \leq 1} |W^o_t|\quad as\quad n\rightarrow \infty, \quad for\ j=1,2.
\end{eqnarray*}
\end{thm}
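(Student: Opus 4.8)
The plan is to deduce Theorem~\ref{T2} along the scheme behind Theorem~\ref{T1} (Lemma~\ref{L3}), the extra work being to verify that the MDPDE has the right convergence rate and to accommodate the trimming functions $f_{j,M}$. First I would check that $(\hat\theta_n^\A,\hat\sigma_n^\A)$ obeys the stochastic orders demanded in \textbf{A0}, namely $\sqrt{nh_n}\,\|\hat\theta_n^\A-\theta_0\|=O_P(1)$ and $\sqrt{n}\,|\hat\sigma_n^\A-\sigma_0|=O_P(1)$ under $H_0$. Under \textbf{A1}--\textbf{A8} this is precisely the consistency-with-rate statement proved for the MDPDE in Lee and Song (2013); conditions \textbf{A5}--\textbf{A8} (convex compact $\Theta$ with $(\theta_0,\sigma_0)$ interior, extra smoothness of $a$ in $\theta$ with the relevant derivatives in $\mathcal{P}$, identifiability, and a non-degenerate information matrix) are imposed exactly so that their argument applies. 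With \textbf{A0} thus in force, the MDPDE can play the role of the estimator in Lemma~\ref{L3}.

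The core of the proof is to show that the trimmed residual CUSUM is asymptotically equivalent, uniformly in $k$, to the CUSUM of $f_{j,M}(\eta_i^2)$ with $\eta_i:=(W_{t_i}-W_{t_{i-1}})/\sqrt{h_n}$; that is, $n^{-1/2}\max_k\big|\sum_{i\le k}(f_{j,M}(\hat Z_{\A,i}^2)-f_{j,M}(\eta_i^2))-\tfrac{k}{n}\sum_{i\le n}(f_{j,M}(\hat Z_{\A,i}^2)-f_{j,M}(\eta_i^2))\big|=o_P(1)$. Inserting the residual $Z_i$ evaluated at the true parameter, this splits into a discretization part $f_{j,M}(Z_i^2)-f_{j,M}(\eta_i^2)$ and an estimation part $f_{j,M}(\hat Z_{\A,i}^2)-f_{j,M}(Z_i^2)$. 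For the discretization part the Lipschitz bound $|f_{j,M}(x)-f_{j,M}(y)|\le|x-y|$ reduces everything to $\sum_i|Z_i^2-\eta_i^2|$, and an Ito--Taylor expansion together with \textbf{A1}--\textbf{A4} yields $E|Z_i-\eta_i|^2=O(h_n^2)$, so $n^{-1/2}\sum_i|Z_i^2-\eta_i^2|=O_P(\sqrt{n}\,h_n)=o_P(1)$ precisely because $nh_n^2\to0$. For the estimation part I would write $\hat Z_{\A,i}=(\sigma_0/\hat\sigma_n^\A)Z_i+b_i$ with $b_i=\sqrt{h_n}\,(\hat\sigma_n^\A)^{-1}\{a(X_{t_{i-1}},\theta_0)-a(X_{t_{i-1}},\hat\theta_n^\A)\}$, so that $\hat Z_{\A,i}^2=(1+u_n)Z_i^2+\rho_i$ with $u_n:=(\sigma_0/\hat\sigma_n^\A)^2-1=O_P(n^{-1/2})$. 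The additive term $\rho_i$ is dealt with by Lipschitz-ness, a Taylor expansion of $a$ in $\theta$, and the bounds $\sum_i b_i^2=O_P(1)$ and $\max_k|\sum_{i\le k}\partial_\theta a(X_{t_{i-1}},\theta_0)\eta_i|=O_P(\sqrt n)$ combined with $\sqrt{h_n}\|\hat\theta_n^\A-\theta_0\|=O_P(n^{-1/2})$. The multiplicative term $f_{j,M}((1+u_n)Z_i^2)-f_{j,M}(Z_i^2)$ is, by the boundedness and Lipschitz-ness of $f_{j,M}$, of the form $u_n g_{n,i}$ with $g_{n,i}$ uniformly bounded; forming the CUSUM \emph{before} inserting the bound, its partial sums are of order $\sqrt n$, so the contribution to $T_{j,n}^\A$ is $n^{-1/2}\cdot O_P(n^{-1/2})\cdot O_P(\sqrt n)=O_P(n^{-1/2})=o_P(1)$.

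Once the reduction is complete the limit is standard: the $f_{j,M}(\eta_i^2)$ are i.i.d.\ with $0<\tau_j^2:=\mathrm{Var}\,f_{j,M}(\eta_1^2)<\infty$, so Donsker's theorem gives weak convergence on $D[0,1]$ of the centered partial-sum process to $\tau_j W^o$, whence $\tau_j^{-1}n^{-1/2}\max_k|\sum_{i\le k}f_{j,M}(\eta_i^2)-\tfrac{k}{n}\sum_{i\le n}f_{j,M}(\eta_i^2)|\stackrel{d}{\rightarrow}\sup_{0\le t\le1}|W^o_t|$. Together with $\hat\tau_{j,n}^2\stackrel{P}{\rightarrow}\tau_j^2$ (the same two approximations plus the ergodic law of large numbers from \textbf{A2}), Slutsky's lemma and the continuous mapping theorem for the supremum functional then give the assertion for $j=1,2$ and every $\A\ge0$.

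The hard part will be the estimation part of the reduction. Because $f_{j,M}$ is only piecewise linear — not differentiable at $M$, and additionally at $2M$ for $f_{2,M}$ — no first-order Taylor expansion in the residual is available, while the crude Lipschitz bound applied to $\sum_i|\hat Z_{\A,i}^2-Z_i^2|$ directly only yields an $O_P(1)$, not $o_P(1)$, bound for the normalized remainder, since $\hat Z_{\A,i}^2-Z_i^2$ carries the term $u_n Z_i^2$ and $\sum_i Z_i^2$ is of order $n$. The way out — and, I believe, the reason the \emph{trimmed} squares are used rather than squares of hard-truncated residuals — is that $f_{j,M}$ is bounded and continuous, so the multiplicative rescaling $Z_i^2\mapsto(1+u_n)Z_i^2$ perturbs each summand by a bounded amount proportional to $|u_n|$; this replaces the offending term by $|u_n|$ times a \emph{centered} partial sum of bounded variables, which is $o_P(\sqrt n)$, whereas a discontinuous hard truncation $x\mapsto x\,\mathbf{1}(x\le M)$ would not permit this step. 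The single spot calling for genuine bookkeeping is the control of indices with $Z_i^2$ within $O(u_n)$ of a kink point — a set whose relative frequency vanishes, each index contributing an $O(u_n)$ error.
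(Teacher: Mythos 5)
Your proposal is correct and follows essentially the same route as the paper: the MDPDE rate under \textbf{A5}--\textbf{A8} is imported from Lee and Song (2013) so that \textbf{A0} holds, the i.i.d.\ sequence $f_{j,M}(Z_i^2)$ is handled by the invariance principle and continuous mapping, the scale estimator $\hat\tau_{j,n}$ is treated via the Lipschitz bound $|f_{j,M}(x)-f_{j,M}(y)|\le|x-y|$, and the CUSUM of $f_{j,M}(Z_i^2)-f_{j,M}(\hat Z_{\A,i}^2)$ is shown negligible by exploiting that $f_{j,M}$ acts exactly linearly away from its kinks together with a count of the indices whose $Z_i^2$ falls in a shrinking neighborhood of a kink (the paper realizes this count as a Binomial$(n,\delta_n)$ variable with $\delta_n\lesssim n^{-q}$ controlled by the law of the iterated logarithm). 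The one organizational difference is that you split the estimation error into a multiplicative part $u_nZ_i^2$ and an additive part and claim the multiplicative perturbation equals $u_n$ times a bounded array with $O_P(\sqrt n)$ centered partial sums; since that array depends on the data-dependent $u_n$, this step as stated is loose, and the paper's device for making it rigorous is to prove Lemma \ref{L3} for the deterministic indicator sets $1_{S_n}(Z_i^2)$ (so the $\sqrt n|\hat\sigma_n-\sigma_0|$ factor multiplies a genuinely centered partial sum of $1_{S_n}(Z_i^2)Z_i^2$) and to work on the event $\{\max_i|Z_i^2-\hat Z_i^2|<n^{-q}\}$, on which $H_{i,M}$ is exactly $\pm(Z_i^2-\hat Z_i^2)$ or $0$ outside the kink neighborhoods $\Lambda_{3,n}$.
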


\vspace{0.3cm}
\noindent{\bf Remark 1.} For the selection of the tuning constant $M$ in $f_{j,M}$, we rely on the fact that under $H_0$, the distribution of $Z_{\A,i}$ is  approximated to $N(0,1)$ as $h_n$ goes to 0.
Depending on the extent of contamination, one can choose $M$ as the squared number of a proper quantile of the standard normal distribution. For example, if it seems that contamination is low or it is not certain of contamination, one can use the 99.5\% quantile, \ie $M=2.576^2$.
Although it is not easy to assess the degree of contamination, we propose to use $M$ in $[z_{0.025}^2, z_{0.005}^2]=[3.84,6.63]$ based on our simulation results.

\vspace{0.3cm}
\noindent{\bf Remark 2.} The proposed test is not suitable for detecting changes in the drift parameter. The main reason seems to be that  a change in $\theta$ could not make a significant effect on $\hat Z_{\A,i}$ when $h_n$ is small. To see this, note  that
\[\hat{Z}_{\A,i}=\frac{X_{t_i}-X_{t_{i-1}}}{\sqrt{h_n}\hat{\sigma}_n^\A}+a(X_{t_{i-1}}, \hat{\theta}_n^\A)O_p(\sqrt{h_n}).\]
Roughly speaking, whatever the estimated value of $\theta$ is, the influence of $a(X_{t_{i-1}}, \hat{\theta}_n^\A)$ on $\hat Z_{\A,i}$  become reduced when $h_n$ is small,
whereas the estimate of $\sigma$ can make great differences in the residuals. This is why the residual-based CUSUM test is not sensitive to the drift parameter change but is sensitive to the dispersion parameter change.  Such tendency that the residual-based test misses a change of certain parameter has been reported, for example, in Lee (2011) for diffusion process and Song and Kang (2018) for ARMA-GARCH models.
As will be seen in the following section, in the cases that only the drift parameter is changed, all tests considered
produce empirical powers close to significance level. This means that the change of the drift parameter does not affect the performance of the tests.  When the tests reject the null hypothesis, one can therefore conclude that the dispersion parameter has changed.

\vspace{0.3cm}
\noindent{\bf Remark 3.} Any other robust estimators satisfying $\sqrt{nh_n}||\hat{\theta}_n-\theta_0||=O_P(1)$ and $\sqrt{n}|\hat{\sigma}_n-\sigma_0|=O_P(1)$ can be used in the test procedure.

\vspace{0.3cm}
\noindent{\bf Remark 4.} Other types of functions can be employed to trim the squared residuals.  For example, Hampel's function in Andrews et al. (1972), which indeed is an intermediate form between $f_{1,M}$ and $f_{2,M}$, can be used. The performance of the test may be different depending on the trimming functions and the tuning constant $M$.  For the Ornstein-Uhlenbeck process, $f_{2,M}$ with $M=z_{0.005}^2$ showed best performance, see the simulation study below.

\vspace{0.3cm}
\noindent{\bf Remark 5.} One may consider the CUSUM of squares test based on the trimmed residuals, that is,
\begin{eqnarray*}
\tilde T_{j,n}^\A:=\frac{1}{\sqrt{n}\hat{\tau}_{g,j}} \max_{1\leq k\leq n } \Big|\sum_{i=1}^k g_{j,M}^2(\hat{Z}_i^\A) -\frac{k}{n}\sum_{i=1}^ng_{j,M}^2(\hat{Z}_i^\A)\Big|,
\end{eqnarray*}
where $g_{j,M}(x)=\textrm{sign}(x) f_{j,M}(|x|)$ and  $\hat{\tau}_{g,j}$ is the sample variance of $\{ g_{j,M}^2(\hat{Z}_i^\A) \}$. In this case, the tuning constant $M$ is chosen as the just quantile of $N(0,1)$. According to our simulation results (not reported), the performances of $T_{j,n}^\A$ and $\tilde T_{j,n}^\A$  are almost similar.  As mentioned above, we present $T_{j,n}^\A$ since it is easier to handle in deriving its asymptotic distribution.


\section{Simulation study}
In the present simulation, we compare the performances of the naive test $T_n$ and the proposed tests  $T_{1,n}^\A$ and $T_{2,n}^\A$.
For this task, we consider the following Ornstein-Uhlenbeck (OU) process:
\begin{eqnarray}\label{3.1}
dX_t = - \theta X_t dt + {\sigma} dW_t ,
\qquad X_0 = 0.
\end{eqnarray}
 The sample $\{ X_{t_i}\}_{i=0}^n$ is obtained with the sampling interval of $h_n=n^{-0.75}$, where the path of $X$ is generated via the Euler scheme with the generating interval of $h_n/20$. 
For the tuning constant $M$, we use $z_{0.005}^2$(=6.63) and $z_{0.025}^2$(=3.84).  
To evaluate the empirical sizes, we generate paths with $(\theta, \sigma)$=(1,1). For the powers, we change the parameter $(\theta,\sigma)$ from (1,1) to (1,1.2), (1,1.5), (5,1), and (5,1.2) at the midpoint $t=nh_n /2$.  Empirical sizes and powers are calculated at 5\% significance level, based on 5,000 repetitions.
The corresponding critical value is 1.358, which is obtained from the following well-known formula:
\[P\left\{ \sup_{0\leq t \leq 1} |W^o_t| \leq u \right\} =\sum_{k=-\infty}^{\infty} (-1)^k \exp(-2k^2u^2).\]

\begin{table}[t]
\caption{\small  Empirical sizes of $T_n$, $T_{1,n}^\A$ and $T_{2,n}^\A$ in the case of $h_n=n^{-0.75}$ and no contamination}\label{Tab1}\vspace{0.1cm}
\tabcolsep=4pt
\renewcommand{\arraystretch}{1.05}
\centering
{\footnotesize
\begin{tabular}{cccccccccccccc}
\toprule
      &       &       & \multicolumn{5}{c}{$M=z_{0.005}^2$}                &       & \multicolumn{5}{c}{$M=z_{0.025}^2$} \\
\cmidrule{4-8}\cmidrule{10-14} $n$     & $T_n$    & $T_{j,n}^\A$    & $\A=0.1$   & $\A=0.2$   & $\A=0.3$  & $\A=0.5$   & $\A=1$     &
& $\A=0.1$   & $\A=0.2$   & $\A=0.3$   & $\A=0.5$   & $\A=1$ \\
\midrule
200   & 0.039 & $T_{1,n}^\A$    & 0.037 & 0.037 & 0.037 & 0.037 & 0.037 &       & 0.042 & 0.041 & 0.041 & 0.039 & 0.039 \\
      &       & $T_{2,n}^\A$    & 0.038 & 0.039 & 0.038 & 0.037 & 0.038 &       & 0.042 & 0.041 & 0.041 & 0.041 & 0.040 \\
\midrule
500   & 0.045 & $T_{1,n}^\A$    & 0.046 & 0.046 & 0.046 & 0.047 & 0.047 &       & 0.047 & 0.047 & 0.048 & 0.048 & 0.048 \\
      &       & $T_{2,n}^\A$    & 0.048 & 0.049 & 0.048 & 0.048 & 0.049 &       & 0.051 & 0.051 & 0.051 & 0.049 & 0.049 \\
\midrule
1000  & 0.046 & $T_{1,n}^\A$    & 0.044 & 0.045 & 0.045 & 0.045 & 0.046 &       & 0.047 & 0.048 & 0.048 & 0.048 & 0.048 \\
      &       & $T_{2,n}^\A$    & 0.045 & 0.044 & 0.044 & 0.045 & 0.045 &       & 0.047 & 0.047 & 0.047 & 0.047 & 0.047 \\
\midrule
3000  & 0.049 & $T_{1,n}^\A$    & 0.047 & 0.047 & 0.047 & 0.048 & 0.048 &       & 0.047 & 0.046 & 0.046 & 0.046 & 0.046 \\
      &       & $T_{2,n}^\A$    & 0.047 & 0.048 & 0.048 & 0.048 & 0.048 &       & 0.048 & 0.048 & 0.048 & 0.048 & 0.047  \\
\bottomrule
\end{tabular}}
\end{table}\vspace{0cm}

\begin{table}[!h]
\caption{\small  Empirical powers of $T_n$, $T_{1,n}^\A$ and $T_{2,n}^\A$ without outliers when $(\theta_0,\sigma_0)$ changes from (1,1) to $(\theta_1,\sigma_1)$}\label{Tab2}\vspace{0.1cm}
\tabcolsep=3pt
\renewcommand{\arraystretch}{1.05}
\centering
{\footnotesize
\begin{tabular}{ccccccccccccccc}
\toprule
                      &          &         &             & \multicolumn{5}{c}{$M=z_{0.005}^2$} &&\multicolumn{5}{c}{$M=z_{0.025}^2$}  \\
\cmidrule{5-9}\cmidrule{11-15}
$(\theta_1,\sigma_1)$ &     $n$    &  $T_n$  & $T_{j,n}^\A$   & $\A=0.1$ & $\A=0.2$ & $\A=0.3$ & $\A=0.5$ & $\A=1.0$&& $\A=0.1$ & $\A=0.2$ & $\A=0.3$ & $\A=0.5$ & $\A=1.0$  \\\midrule
\multirow{6}{*}{(1,1.2)} & 200   & 0.297 &$T_{1,n}^\A$     & 0.303 & 0.302 & 0.302 & 0.301 & 0.301 &       & 0.288 & 0.287 & 0.287 & 0.287 & 0.288 \\
                         &       &       &$T_{2,n}^\A$     & 0.280 & 0.279 & 0.279 & 0.275 & 0.272 &       & 0.191 & 0.189 & 0.186 & 0.184 & 0.180 \\
\cmidrule{2-15}      & 500   & 0.708 & $T_{1,n}^\A$     & 0.699 & 0.700 & 0.699 & 0.699 & 0.698 &       & 0.664 & 0.662 & 0.662 & 0.660 & 0.658 \\
                     &       &       & $T_{2,n}^\A$     & 0.664 & 0.662 & 0.659 & 0.657 & 0.653 &       & 0.451 & 0.448 & 0.444 & 0.441 & 0.433 \\
\cmidrule{2-15}      & 1000  & 0.957 & $T_{1,n}^\A$     & 0.953 & 0.953 & 0.953 & 0.953 & 0.953 &       & 0.936 & 0.936 & 0.935 & 0.934 & 0.933 \\
                     &       &       & $T_{2,n}^\A$     & 0.934 & 0.933 & 0.934 & 0.932 & 0.930 &       & 0.755 & 0.751 & 0.747 & 0.744 & 0.739 \\
\midrule
\multirow{6}{*}{(1,1.5)} & 200   & 0.922 & $T_{1,n}^\A$     & 0.929 & 0.929 & 0.928 & 0.927 & 0.925 &       & 0.914 & 0.912 & 0.912 & 0.911 & 0.909 \\
                         &       &       & $T_{2,n}^\A$     & 0.904 & 0.901 & 0.898 & 0.894 & 0.889 &       & 0.746 & 0.734 & 0.718 & 0.693 & 0.662 \\
\cmidrule{2-15}      & 500   & 1.000 & $T_{1,n}^\A$     & 1.000 & 1.000 & 1.000 & 1.000 & 1.000 &       & 1.000 & 1.000 & 1.000 & 1.000 & 1.000 \\
                     &       &       & $T_{2,n}^\A$     & 1.000 & 1.000 & 1.000 & 1.000 & 1.000 &       & 0.990 & 0.988 & 0.985 & 0.978 & 0.971 \\
\cmidrule{2-15}      & 1000  & 1.000 & $T_{1,n}^\A$     & 1.000 & 1.000 & 1.000 & 1.000 & 1.000 &       & 1.000 & 1.000 & 1.000 & 1.000 & 1.000 \\
                     &       &       & $T_{2,n}^\A$     & 1.000 & 1.000 & 1.000 & 1.000 & 1.000 &       & 1.000 & 1.000 & 1.000 & 1.000 & 1.000 \\
\midrule
\multirow{6}{*}{(5,1)} & 200   & 0.036 & $T_{1,n}^\A$     & 0.039 & 0.039 & 0.039 & 0.040 & 0.041 &       & 0.040 & 0.039 & 0.039 & 0.039 & 0.041 \\
                       &       &       & $T_{2,n}^\A$     & 0.039 & 0.038 & 0.037 & 0.036 & 0.036 &       & 0.039 & 0.040 & 0.039 & 0.038 & 0.037 \\
\cmidrule{2-15}      & 500   & 0.048 & $T_{1,n}^\A$     & 0.049 & 0.049 & 0.049 & 0.049 & 0.049 &       & 0.046 & 0.046 & 0.047 & 0.046 & 0.048 \\
                     &       &       & $T_{2,n}^\A$     & 0.045 & 0.045 & 0.045 & 0.046 & 0.047 &       & 0.041 & 0.042 & 0.042 & 0.042 & 0.042 \\
\cmidrule{2-15}      & 1000  & 0.043 & $T_{1,n}^\A$     & 0.045 & 0.045 & 0.045 & 0.045 & 0.044 &       & 0.046 & 0.046 & 0.045 & 0.044 & 0.044 \\
                     &       &       & $T_{2,n}^\A$     & 0.045 & 0.045 & 0.045 & 0.045 & 0.045 &       & 0.044 & 0.044 & 0.043 & 0.043 & 0.042 \\
\midrule
\multirow{6}{*}{(5,1.2)} & 200   & 0.210 & $T_{1,n}^\A$     & 0.215 & 0.216 & 0.216 & 0.216 & 0.217 &       & 0.203 & 0.204 & 0.205 & 0.206 & 0.207 \\
                         &       &       & $T_{2,n}^\A$     & 0.201 & 0.202 & 0.201 & 0.201 & 0.198 &       & 0.133 & 0.132 & 0.131 & 0.130 & 0.125 \\
\cmidrule{2-15}      & 500   & 0.628 & $T_{1,n}^\A$     & 0.629 & 0.629 & 0.629 & 0.629 & 0.627 &       & 0.588 & 0.589 & 0.589 & 0.589 & 0.588 \\
                     &       &       & $T_{2,n}^\A$     & 0.583 & 0.583 & 0.582 & 0.582 & 0.579 &       & 0.393 & 0.393 & 0.390 & 0.388 & 0.382 \\
\cmidrule{2-15}      & 1000  & 0.940 & $T_{1,n}^\A$     & 0.939 & 0.938 & 0.938 & 0.938 & 0.937 &       & 0.910 & 0.910 & 0.910 & 0.910 & 0.909 \\
                     &       &       & $T_{2,n}^\A$     & 0.908 & 0.907 & 0.907 & 0.906 & 0.907 &       & 0.719 & 0.717 & 0.716 & 0.714 & 0.709 \\
\bottomrule
\end{tabular}}
\end{table}\vspace{0cm}

We first examine the case where the data is not contaminated by outliers. The empirical sizes and powers  are presented in Tables \ref{Tab1} and \ref{Tab2}, respectively.  One can see that $T_n$, $T_{1,n}^\A$, and $T_{2,n}^\A$ show no size distortions and produce reasonably good powers against the change of the dispersion parameter $\sigma$, regardless of whether the drift parameter $\theta$ changes or not. It is, however, observed that all the tests can not detect the change of $\theta$ as mentioned in Remark 2. They produces empirical powers very close to the significance level. $T_n$ and $T_{1,n}^\A$ with $M=z_{0.005}^2$ perform similarly, and $T_{1,n}^\A$ with $M=z_{0.025}^2$ and $T_{2,n}$ with $M=z_{0.005}^2$ are found to yield slightly smaller powers. $T_{2,n}^\A$ with $M=z_{0.025}^2$ is comparatively less powerful, but its power approaches 1 as the sample size increases.  Interestingly,  MDPDE's tuning parameter $\A$ does not make a significant difference in the performance. As will be seen in the contaminated cases below, the performance of the proposed tests are also not significantly different depending on the value of $\A$, so the choice of $\A$ does not seem to be critical in the testing procedure.
Although not reported here, we can see that the powers tend to decrease with a decrease in $M$ but no size distortions are found.


\begin{table}[t]
\caption{\small  Empirical sizes of $T_n$, $T_{1,n}^\A$ and $T_{2,n}^\A$ under the contamination with  $p=0.5\%$ and $\sigma_v^2=1$}\label{Tab3}\vspace{0.1cm}
\tabcolsep=4pt
\renewcommand{\arraystretch}{1.05}
\centering
{\footnotesize
\begin{tabular}{cccccccccccccc}
\toprule
      &         &             & \multicolumn{5}{c}{$M=z_{0.005}^2$} &&\multicolumn{5}{c}{$M=z_{0.025}^2$}  \\
\cmidrule{4-8}\cmidrule{10-14}
 $n$    &  $T_n$  & $T_{j,n}^\A$   & $\A=0.1$ & $\A=0.2$ & $\A=0.3$ & $\A=0.5$ & $\A=1.0$&& $\A=0.1$ & $\A=0.2$ & $\A=0.3$ & $\A=0.5$ & $\A=1.0$  \\
\midrule
200   & 0.051 & $T_{1,n}^\A$    & 0.054 & 0.051 & 0.051 & 0.051 & 0.052 &       & 0.051 & 0.049 & 0.048 & 0.048 & 0.049 \\
      &       & $T_{2,n}^\A$    & 0.039 & 0.039 & 0.039 & 0.039 & 0.039 &       & 0.041 & 0.040 & 0.041 & 0.041 & 0.040 \\
\midrule
500   & 0.100 & $T_{1,n}^\A$    & 0.061 & 0.061 & 0.061 & 0.061 & 0.062 &       & 0.055 & 0.055 & 0.055 & 0.055 & 0.056 \\
      &       & $T_{2,n}^\A$    & 0.046 & 0.046 & 0.046 & 0.046 & 0.047 &       & 0.047 & 0.047 & 0.047 & 0.049 & 0.049 \\
\midrule
1000  & 0.157 & $T_{1,n}^\A$    & 0.068 & 0.067 & 0.067 & 0.068 & 0.069 &       & 0.053 & 0.053 & 0.053 & 0.053 & 0.053 \\
      &       & $T_{2,n}^\A$    & 0.038 & 0.037 & 0.038 & 0.037 & 0.037 &       & 0.044 & 0.044 & 0.044 & 0.045 & 0.046 \\
\midrule
3000  & 0.213 & $T_{1,n}^\A$    & 0.078 & 0.078 & 0.078 & 0.077 & 0.078 &       & 0.062 & 0.062 & 0.062 & 0.062 & 0.062 \\
      &       & $T_{2,n}^\A$    & 0.048 & 0.048 & 0.047 & 0.047 & 0.047 &       & 0.049 & 0.048 & 0.048 & 0.048 & 0.048 \\
\bottomrule
   \end{tabular}  }
 \end{table}

 \begin{table}[!h]
\caption{\small  Empirical powers of $T_n$, $T_{1,n}^\A$ and $T_{2,n}^\A$ in the case of $h_n=n^{-0.75}$, $p=0.5\%$ and $\sigma_v^2=1$ when $(\theta_0,\sigma_0)$ changes from (1,1) to $(\theta_1,\sigma_1)$}\label{Tab4}\vspace{0.1cm}
\tabcolsep=3pt
\renewcommand{\arraystretch}{1.05}
\centering
{\footnotesize
\begin{tabular}{ccccccccccccccc}
\toprule
                      &          &         &             & \multicolumn{5}{c}{$M=z_{0.005}^2$} &&\multicolumn{5}{c}{$M=z_{0.025}^2$}\\
\cmidrule{5-9}\cmidrule{11-15}
 $(\theta_1,\sigma_1)$&     $n$    &  $T_n$  & $T_{j,n}^\A$   & $\A=0.1$ & $\A=0.2$ & $\A=0.3$ & $\A=0.5$ & $\A=1.0$&& $\A=0.1$ & $\A=0.2$ & $\A=0.3$ & $\A=0.5$ & $\A=1.0$  \\
\midrule
\multirow{6}{*}{(1,1.2)} & 200   & 0.201 & $T_{1,n}^\A$     & 0.294 & 0.294 & 0.292 & 0.289 & 0.290 &       & 0.285 & 0.282 & 0.281 & 0.280 & 0.277 \\
      &       &       & $T_{2,n}^\A$    & 0.274 & 0.272 & 0.269 & 0.267 & 0.262 &       & 0.191 & 0.186 & 0.182 & 0.177 & 0.173 \\
\cmidrule{2-15}      & 500   & 0.274 & $T_{1,n}^\A$    & 0.634 & 0.635 & 0.637 & 0.638 & 0.639 &       & 0.631 & 0.629 & 0.628 & 0.628 & 0.625 \\
      &       &       & $T_{2,n}^\A$    & 0.645 & 0.640 & 0.637 & 0.634 & 0.632 &       & 0.456 & 0.442 & 0.438 & 0.431 & 0.425 \\
\cmidrule{2-15}      & 1000  & 0.224 & $T_{1,n}^\A$    & 0.909 & 0.909 & 0.909 & 0.910 & 0.910 &       & 0.914 & 0.914 & 0.913 & 0.914 & 0.913 \\
      &       &       & $T_{2,n}^\A$    & 0.927 & 0.926 & 0.925 & 0.924 & 0.924 &       & 0.770 & 0.757 & 0.752 & 0.745 & 0.740 \\
\midrule
\multirow{6}{*}{(1,1.5)} & 200   & 0.646 & $T_{1,n}^\A$    & 0.873 & 0.878 & 0.880 & 0.880 & 0.881 &       & 0.887 & 0.885 & 0.885 & 0.883 & 0.880 \\
      &       &       & $T_{2,n}^\A$    & 0.896 & 0.892 & 0.888 & 0.882 & 0.875 &       & 0.739 & 0.718 & 0.700 & 0.677 & 0.644 \\
\cmidrule{2-15}      & 500   & 0.507 & $T_{1,n}^\A$    & 0.999 & 0.999 & 0.999 & 0.999 & 0.999 &       & 1.000 & 1.000 & 1.000 & 1.000 & 1.000 \\
      &       &       & $T_{2,n}^\A$    & 1.000 & 1.000 & 1.000 & 1.000 & 1.000 &       & 0.990 & 0.987 & 0.984 & 0.981 & 0.974 \\
\cmidrule{2-15}      & 1000  & 0.377 & $T_{1,n}^\A$    & 1.000 & 1.000 & 1.000 & 1.000 & 1.000 &       & 1.000 & 1.000 & 1.000 & 1.000 & 1.000 \\
      &       &       & $T_{2,n}^\A$    & 1.000 & 1.000 & 1.000 & 1.000 & 1.000 &       & 1.000 & 1.000 & 1.000 & 1.000 & 1.000 \\
\midrule
\multirow{6}{*}{(5,1)} & 200   & 0.050 & $T_{1,n}^\A$    & 0.052 & 0.051 & 0.051 & 0.052 & 0.054 &       & 0.045 & 0.045 & 0.045 & 0.044 & 0.045 \\
      &       &       & $T_{2,n}^\A$    & 0.038 & 0.037 & 0.037 & 0.036 & 0.039 &       & 0.041 & 0.040 & 0.040 & 0.039 & 0.037 \\
\cmidrule{2-15}      & 500   & 0.092 & $T_{1,n}^\A$    & 0.067 & 0.066 & 0.066 & 0.066 & 0.067 &       & 0.058 & 0.057 & 0.057 & 0.057 & 0.056 \\
      &       &       & $T_{2,n}^\A$    & 0.050 & 0.050 & 0.050 & 0.050 & 0.051 &       & 0.044 & 0.044 & 0.043 & 0.043 & 0.043 \\
\cmidrule{2-15}      & 1000  & 0.150 & $T_{1,n}^\A$    & 0.074 & 0.073 & 0.073 & 0.073 & 0.073 &       & 0.061 & 0.060 & 0.061 & 0.061 & 0.061 \\
      &       &       & $T_{2,n}^\A$    & 0.048 & 0.048 & 0.048 & 0.048 & 0.048 &       & 0.048 & 0.047 & 0.047 & 0.047 & 0.047 \\
\midrule
\multirow{6}{*}{(5,1.2)} & 200   & 0.158 & $T_{1,n}^\A$    & 0.225 & 0.225 & 0.225 & 0.226 & 0.226 &       & 0.211 & 0.212 & 0.214 & 0.213 & 0.211 \\
      &       &       & $T_{2,n}^\A$    & 0.200 & 0.202 & 0.201 & 0.201 & 0.201 &       & 0.136 & 0.135 & 0.134 & 0.132 & 0.131 \\
\cmidrule{2-15}      & 500   & 0.256 & $T_{1,n}^\A$    & 0.561 & 0.567 & 0.567 & 0.569 & 0.570 &       & 0.566 & 0.566 & 0.565 & 0.564 & 0.563 \\
      &       &       & $T_{2,n}^\A$    & 0.570 & 0.569 & 0.569 & 0.569 & 0.567 &       & 0.400 & 0.391 & 0.389 & 0.389 & 0.388 \\
\cmidrule{2-15}      & 1000  & 0.219 & $T_{1,n}^\A$    & 0.877 & 0.879 & 0.880 & 0.881 & 0.880 &       & 0.882 & 0.884 & 0.884 & 0.885 & 0.883 \\
      &       &       & $T_{2,n}^\A$    & 0.899 & 0.897 & 0.896 & 0.895 & 0.895 &       & 0.723 & 0.716 & 0.712 & 0.709 & 0.706 \\
\bottomrule
\end{tabular}}
 \end{table}\vspace{0cm}

\begin{table}[h]
\caption{\small  Empirical sizes of $T_n$, $T_{1,n}^\A$ and $T_{2,n}^\A$ in the case of $h_n=n^{-0.75}$, $p=5\%$ and $\sigma_v^2=1$}\label{Tab5}\vspace{0.1cm}
\tabcolsep=4pt
\renewcommand{\arraystretch}{1.05}
\centering
{\footnotesize
\begin{tabular}{cccccccccccccc}
\toprule
      &         &             & \multicolumn{5}{c}{$M=z_{0.005}^2$} &&\multicolumn{5}{c}{$M=z_{0.025}^2$}  \\
\cmidrule{4-8}\cmidrule{10-14}
 $n$    &  $T_n$  & $T_{j,n}^\A$   & $\A=0.1$ & $\A=0.2$ & $\A=0.3$ & $\A=0.5$ & $\A=1.0$&& $\A=0.1$ & $\A=0.2$ & $\A=0.3$ & $\A=0.5$ & $\A=1.0$  \\
\midrule
200   & 0.137 & $T_{1,n}^\A$    & 0.167 & 0.148 & 0.140 & 0.138 & 0.139 &       & 0.140 & 0.111 & 0.106 & 0.105 & 0.106 \\
      &       & $T_{2,n}^\A$    & 0.075 & 0.055 & 0.053 & 0.053 & 0.053 &       & 0.058 & 0.048 & 0.047 & 0.047 & 0.047 \\
\midrule
500   & 0.196 & $T_{1,n}^\A$    & 0.198 & 0.170 & 0.165 & 0.165 & 0.166 &       & 0.149 & 0.123 & 0.116 & 0.117 & 0.120 \\
      &       & $T_{2,n}^\A$    & 0.065 & 0.054 & 0.054 & 0.053 & 0.053 &       & 0.059 & 0.050 & 0.050 & 0.049 & 0.049 \\
\midrule
1000  & 0.217 & $T_{1,n}^\A$    & 0.203 & 0.181 & 0.178 & 0.176 & 0.181 &       & 0.152 & 0.127 & 0.125 & 0.124 & 0.128 \\
      &       & $T_{2,n}^\A$    & 0.068 & 0.058 & 0.057 & 0.056 & 0.057 &       & 0.054 & 0.052 & 0.053 & 0.053 & 0.052 \\
\midrule
3000  & 0.254 & $T_{1,n}^\A$    & 0.210 & 0.196 & 0.194 & 0.195 & 0.202 &       & 0.145 & 0.132 & 0.129 & 0.130  & 0.135 \\
      &       & $T_{2,n}^\A$    & 0.064 & 0.062 & 0.061 & 0.062 & 0.061 &       & 0.058 & 0.059 & 0.060 & 0.060  & 0.058 \\
\bottomrule
   \end{tabular}  }
 \end{table}

 \begin{table}[!h]
\caption{\small  Empirical powers of $T_n$, $T_{1,n}^\A$ and $T_{2,n}^\A$ in the case of $h_n=n^{-0.75}$, $p=5\%$ and $\sigma_v^2=1$ when $(\theta_0,\sigma_0)$ changes from (1,1) to $(\theta_1,\sigma_1)$}\label{Tab6}\vspace{0.1cm}
\tabcolsep=3pt
\renewcommand{\arraystretch}{1.05}
\centering
{\footnotesize
\begin{tabular}{ccccccccccccccc}
\toprule
                      &          &         &             & \multicolumn{5}{c}{$M=z_{0.005}^2$} &&\multicolumn{5}{c}{$M=z_{0.025}^2$}\\
\cmidrule{5-9}\cmidrule{11-15}
 $(\theta_1,\sigma_1)$&     n    &  $T_n$  & $T_{j,n}$   & $\A=0.1$ & $\A=0.2$ & $\A=0.3$ & $\A=0.5$ & $\A=1.0$&& $\A=0.1$ & $\A=0.2$ & $\A=0.3$ & $\A=0.5$ & $\A=1.0$  \\
\midrule
\multirow{6}{*}{(1,1.2)} & 200   & 0.151 & $T_{1,n}^\A$    & 0.233 & 0.240 & 0.242 & 0.246 & 0.246 &       & 0.251 & 0.254 & 0.252 & 0.252 & 0.250 \\
      &       &       & $T_{2,n}^\A$    & 0.211 & 0.222 & 0.225 & 0.224 & 0.225 &       & 0.222 & 0.199 & 0.187 & 0.179 & 0.177 \\
\cmidrule{2-15}      & 500   & 0.197 & $T_{1,n}^\A$    & 0.353 & 0.394 & 0.400 & 0.404 & 0.401 &       & 0.425 & 0.472 & 0.477 & 0.478 & 0.476 \\
      &       &       & $T_{2,n}^\A$    & 0.484 & 0.535 & 0.540 & 0.538 & 0.540 &       & 0.512 & 0.452 & 0.429 & 0.419 & 0.425 \\
\cmidrule{2-15}      & 1000  & 0.231 & $T_{1,n}^\A$    & 0.491 & 0.570 & 0.582 & 0.589 & 0.582 &       & 0.641 & 0.698 & 0.705 & 0.707 & 0.703 \\
      &       &       & $T_{2,n}^\A$    & 0.815 & 0.855 & 0.856 & 0.857 & 0.859 &       & 0.825 & 0.761 & 0.736 & 0.727 & 0.740 \\
\midrule
\multirow{6}{*}{(1,1.5)} & 200   & 0.190 & $T_{1,n}^\A$    & 0.444 & 0.518 & 0.548 & 0.568 & 0.574 &       & 0.574 & 0.647 & 0.668 & 0.681 & 0.685 \\
      &       &       & $T_{2,n}^\A$    & 0.612 & 0.717 & 0.744 & 0.754 & 0.752 &       & 0.721 & 0.708 & 0.682 & 0.657 & 0.634 \\
\cmidrule{2-15}      & 500   & 0.225 & $T_{1,n}^\A$    & 0.671 & 0.822 & 0.849 & 0.866 & 0.867 &       & 0.861 & 0.945 & 0.953 & 0.958 & 0.958 \\
      &       &       & $T_{2,n}^\A$    & 0.947 & 0.994 & 0.996 & 0.996 & 0.996 &       & 0.993 & 0.990 & 0.982 & 0.977 & 0.975 \\
\cmidrule{2-15}      & 1000  & 0.244 & $T_{1,n}^\A$    & 0.887 & 0.969 & 0.978 & 0.981 & 0.980 &       & 0.985 & 0.999 & 0.999 & 0.999 & 0.999 \\
      &       &       & $T_{2,n}^\A$    & 0.999 & 1.000 & 1.000 & 1.000 & 1.000 &       & 1.000 & 1.000 & 1.000 & 1.000 & 1.000 \\
\midrule
\multirow{6}{*}{(5,1)} & 200   & 0.123 & $T_{1,n}^\A$    & 0.175 & 0.152 & 0.146 & 0.144 & 0.144 &       & 0.148 & 0.116 & 0.110 & 0.107 & 0.110 \\
      &       &       & $T_{2,n}^\A$    & 0.082 & 0.060 & 0.056 & 0.053 & 0.054 &       & 0.077 & 0.047 & 0.044 & 0.042 & 0.042 \\
\cmidrule{2-15}      & 500   & 0.171 & $T_{1,n}^\A$    & 0.206 & 0.177 & 0.173 & 0.170 & 0.174 &       & 0.156 & 0.128 & 0.120 & 0.120 & 0.122 \\
      &       &       & $T_{2,n}^\A$    & 0.077 & 0.061 & 0.060 & 0.059 & 0.060 &       & 0.065 & 0.059 & 0.060 & 0.061 & 0.060 \\
\cmidrule{2-15}      & 1000  & 0.206 & $T_{1,n}^\A$    & 0.208 & 0.187 & 0.182 & 0.182 & 0.187 &       & 0.158 & 0.135 & 0.131 & 0.131 & 0.134 \\
      &       &       & $T_{2,n}^\A$    & 0.072 & 0.060 & 0.058 & 0.058 & 0.058 &       & 0.057 & 0.055 & 0.056 & 0.056 & 0.056 \\
\midrule
\multirow{6}{*}{(5,1.2)} & 200   & 0.123 & $T_{1,n}^\A$    & 0.208 & 0.216 & 0.221 & 0.223 & 0.224 &       & 0.206 & 0.216 & 0.217 & 0.217 & 0.221 \\
      &       &       & $T_{2,n}^\A$    & 0.135 & 0.152 & 0.163 & 0.165 & 0.169 &       & 0.125 & 0.136 & 0.134 & 0.134 & 0.134 \\
\cmidrule{2-15}      & 500   & 0.186 & $T_{1,n}^\A$    & 0.314 & 0.355 & 0.363 & 0.367 & 0.366 &       & 0.353 & 0.406 & 0.415 & 0.420 & 0.418 \\
      &       &       & $T_{2,n}^\A$    & 0.378 & 0.460 & 0.465 & 0.467 & 0.470 &       & 0.400 & 0.375 & 0.365 & 0.361 & 0.369 \\
\cmidrule{2-15}      & 1000  & 0.201 & $T_{1,n}^\A$    & 0.474 & 0.552 & 0.567 & 0.571 & 0.564 &       & 0.603 & 0.668 & 0.681 & 0.685 & 0.680 \\
      &       &       & $T_{2,n}^\A$    & 0.753 & 0.804 & 0.807 & 0.809 & 0.811 &       & 0.757 & 0.705 & 0.684 & 0.676 & 0.693 \\
\bottomrule
\end{tabular}  }
 \end{table}

Next, to explore the cases where outliers are involved in the data, we generate contaminated sample $\{X_{t_i}^c\}_{i=0}^n$
 by the following scheme: $X_{t_i}^c\,=X_{t_i}\,+\,p_i\,|V_i| \times sign(X_{t_i})\,$, where $\{p_i\}_{i=0}^n$ and $\{V_i\}_{i=0}^n$ are sequences of i.i.d. random variables from  Bernoulli distribution with success probability $p$ and normal distribution with mean 0 and variance $\sigma_v^2$, respectively; $\{X_{t_i}\}$, $\{p_i\}$, and $\{V_i\}$ are assumed to be all independent.
 We consider the cases of $p=0.5\%$ and $5\%$ to describe a low degree of contamination and more severely contaminated situation, respectively, and  $\sigma_v^2$ is set to 1. 
 The empirical sizes and powers are provided in Tables \ref{Tab3} - \ref{Tab6}. We first note that $T_n$ exhibits size distortions and significant power losses whereas $T_{2,n}$ dramatically eliminates the impact of outliers. $T_{2,n}$ with $M=z_{0.005}^2$ outperforms other tests in all cases considered. On the other hand, $T_{1,n}$ shows relatively good performance in the low contaminated case, i.e., $p=0.5\%$, but are observed to be somewhat affected by outliers when $p=5\%$. In this case, $T_{1,n}$ with the smaller $M$(= $z_{0.025}^2$)  shows more robust behavior  than $T_{1,n}$ with $z_{0.005}^2$.

\begin{figure}[!t]
\includegraphics[height=0.5\textwidth,width=1.0\textwidth]{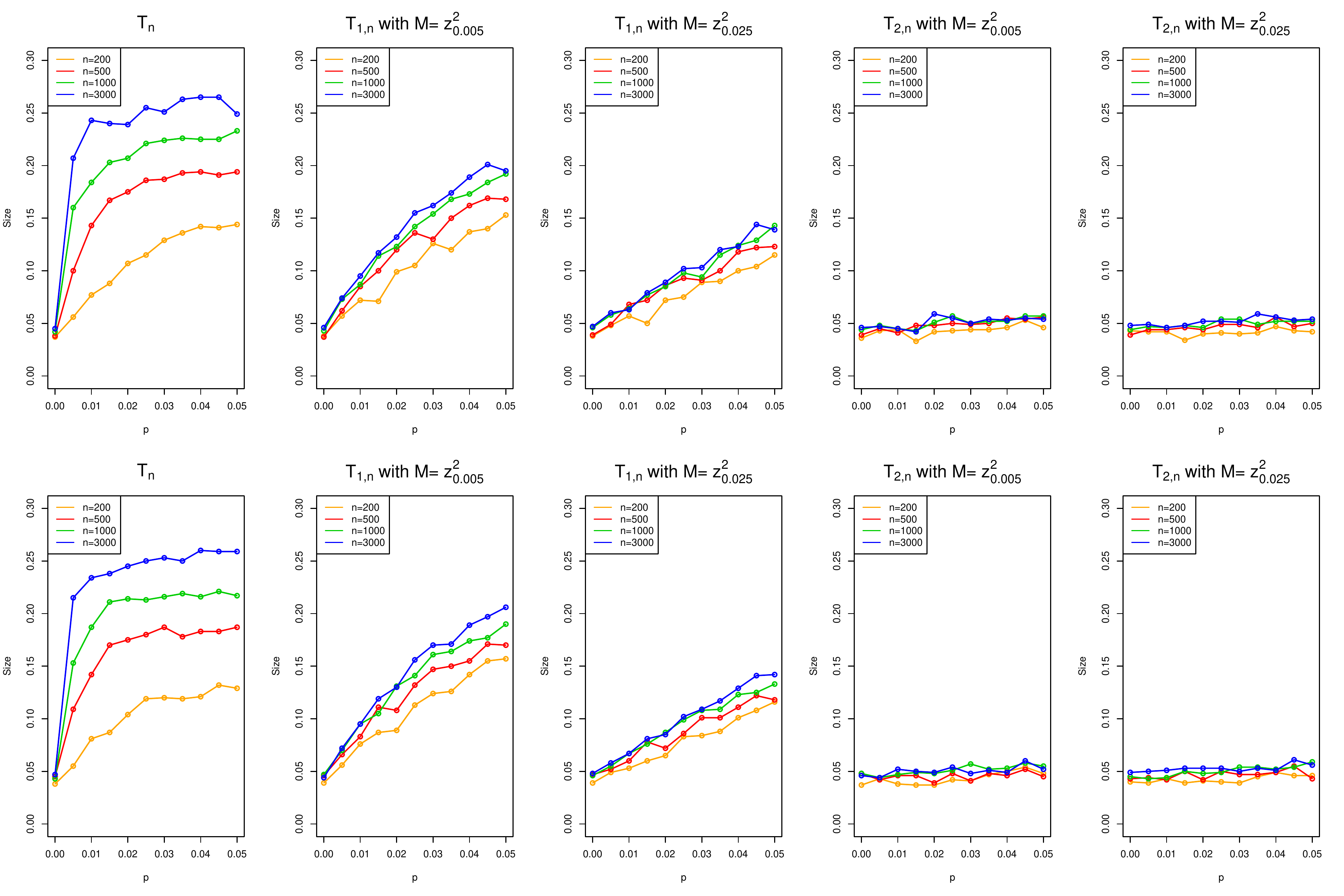}\vspace{-0.1cm}
\caption{\small The plots of the empirical sizes of the naive test, $T_n$,  and the proposed test, $T_{j,n}^{\A=0.2}$.  The upper panel presents  the sizes in the case of $\sigma_v^2=1$ and the lower panel for $\sigma^2_v=2$. }\label{fig1}
\vspace{1.2cm}
\includegraphics[height=0.5\textwidth,width=1\textwidth]{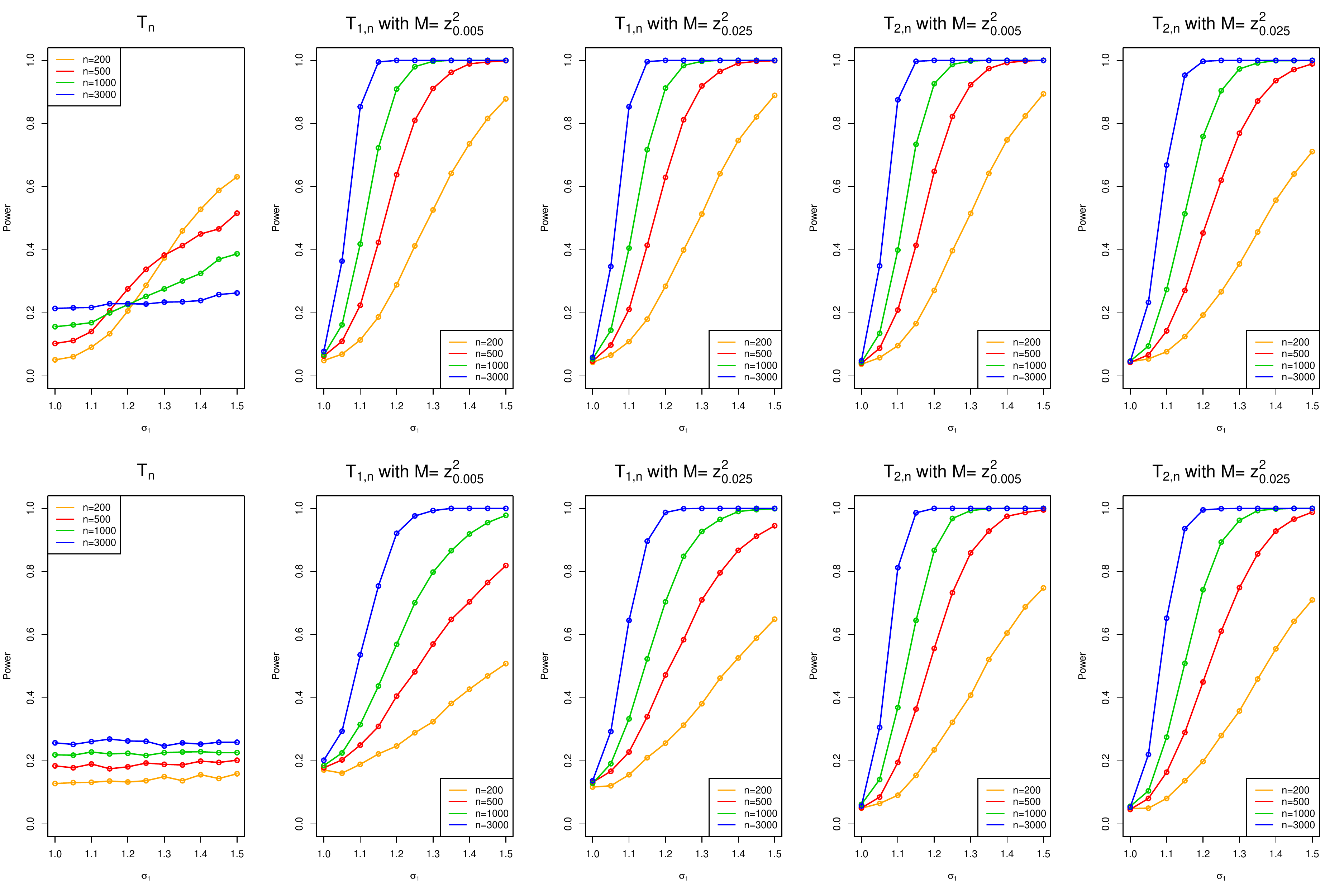} \vspace{-0.4cm}
\caption{\small The plots of the empirical powers of the naive test, $T_n$,  and the proposed test, $T_{j,n}^{\A=0.2}$.  The  upper panel presents the plots for the case of $p=0.5\%, \sigma_v^2=1$ (low contaminated case) and the lower panel  for $p=5\%, \sigma_v^2=2$ (severely contaminated case). }\label{fig2}
\end{figure}

Our findings  can evidently be  seen in  Figures \ref{fig1} and \ref{fig2}. Figure \ref{fig1} presents  the plots of the empirical sizes of $T_n$ and $T_{j,n}^{\A=0.2}$ versus the degree of contamination $p \in \{0\%, 0.05\%, 1\%,\cdots, 5\%\}$. The upper and lower panels depict the results in the cases of $\sigma_v^2=1$ and  $\sigma_v^2=2$, respectively.  One can clearly see the severe size distortions of $T_n$ in the first column in Figure \ref{fig1}.  As can be seen in Table \ref{Tab3}, $T_n$ is damaged even  by the small portion of contamination, i.e., $p=0.5\%$.  It should also be noted that the distortion gets worse as $n$ increases, indicating that particular attention should be paid when dealing with high-frequency data. This is because
the closer the transition distribution gets to normal distribution, the more affected it is by outliers. Upward trends in the second and third columns suggest  that  $T_{1,n}^\A$ may not be suitable for high-contaminated cases.  The last two columns show strong robustness of $T_{2,n}^\A$ yielding no size distortions in all cases.  Figure \ref{fig2} displays the power curves when $\sigma$ changes from 1 to $\sigma_1$ at midpoint. The upper and lower panels presents for the case of $p=0.5\%, \sigma_v^2=1$ (low contaminated case) and $p=5\%, \sigma_v^2=2$ (severely contaminated case), respectively.  Unlike $T_n$ showing  power losses,   $T_{1,n}^\A$ and $T_{1,n}^\A$  yield reasonable powers in both cases.  In particular, $T_{2,n}^\A$ with $M=z^2_{0.005}$ is observed to perform best.

Overall, the results above  support the validity of the proposed test.
In this simulation  section, we see that our proposed test keeps good sizes and powers in the presence of outliers, while the naive test $T_n$ shows severe size distortions and significant power loses. Therefore, our test can be a functional tool to test for parameter change when outliers are speculated to contaminate data.

\section{Real data analysis}
\vspace{-1cm}
\begin{figure}[!h]
\includegraphics[height=0.5\textwidth,width=1\textwidth]{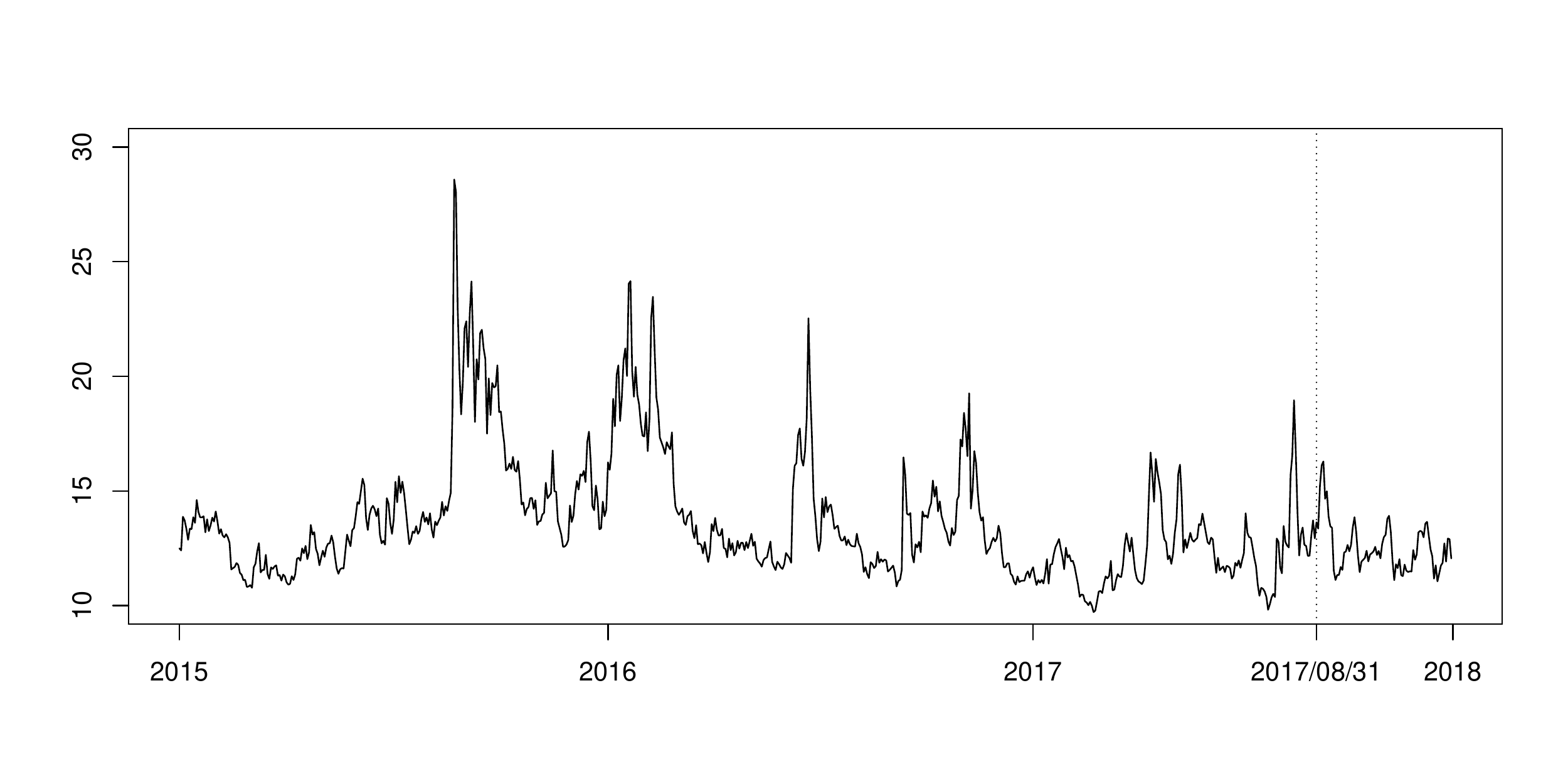}\vspace{-1cm}
\caption{\small Time series  plot of  VKOSPI200 index from Jan 2, 2015 to Dec 28, 2017}\label{VKOSPI}
\end{figure}

We analyse  daily time series of KOSPI200 volatility (VKSOPI200) index. Like the VIX index (the Chicago Board Options Exchange volatility index), VKOPSI200 index is designed to measure 30-day expected volatility of  KOSPI200 index. 
The data analyzed here is depicted in Figure \ref{VKOSPI}, covering 737 trading days from Jan 2, 2015 to Dec  28, 2017.  As a key characteristic, the plot clearly shows a volatile and mean-reverting behaviour. One can also see  a number of  deviating observations. To capture the mean-reversion, we employ the following OU process:
\[ d X_t = \lambda(\mu-  X_t) dt + {\sigma} dW_t,\]
where $X_t$ is the value of VKOSPI200 index at time $t$. One may consider the OU process with jump component to accommodate the spiky observations,  but in the present analysis, we regard these observations as outliers and examine whether or not there were parameter changes in the fitted model.
We conduct the naive test $T_n$ and the proposed test $T_{1,n}^\A$ and $T_{2,n}^\A$ to the data until Aug 31, 2017. Relying on the results in the simulation study, our decision is, however, made based on $T_{n,2}^\A$ with $M=z_{0.005}^2$. The remaining data set after Sep 1, 2017  is used to compare the performances of the models without break and with breaks obtained by $T_{2,n}^\A$.
\begin{table}[!h]
  \centering
  \caption{\small Test statistics [p-values] of $T_n$, $T_{1,n}^\A$, and $T_{2,n}^\A$}\vspace{0.1cm}
  {\footnotesize
    \begin{tabular}{cccccccc}
    \toprule
    $T_n$ & $T_{j,n}^\A$ & $\A=0.1$ & $\A=0.2$ & $\A=0.3$ & $\A=0.4$ & $\A=0.5$ & $\A=1$ \\
    \midrule
    0.957 & \multicolumn{1}{l}{$T_{1,n}^\A$} & 1.863 & 1.843 & 1.807 & 1.769 & 1.789 & 1.835 \\
    $[0.319]$ &       & [0.002] & [0.002] & [0.003] & [0.004] & [0.003] & [0.002] \\
\cmidrule{2-8}          & $T_{2,n}^\A$ & 1.460 & 1.369 & 1.625 & 1.648 & 1.613 & 1.566 \\
          &       & [0.028] & [0.047] & [0.010] & [0.009] & [0.011] & [0.015] \\
    \bottomrule
    \end{tabular}}
  \label{test_results}%
\end{table}

Table \ref{test_results} presents the test statistics and p-values of $T_n$, $T_{1,n}^\A$ and $T_{2,n}^\A$. We first note that the p-value of $T_n$ is obtained to be 0.319 whereas all the p-values of $T_{1,n}^\A$ and $T_{2,n}^\A$ are less then 0.05. As observed in the simulation study, this indicates that the outlying observations are highly likely to have hindered $T_n$ from detecting a significant parameter change. To find further changes, we use the binary segmentation procedure (cf. Aue and Horváth (2013)) and locate three more breaks. The sub-periods divided by the estimated change-points and the ML and MDPD estimates for each sub-period are reported in Table \ref{sub_estimates}.  As mentioned in  Remark 2, the tests are difficult to detect changes in drift parameter and therefore it should be interpreted  that the obtained sub-periods are due to the changes in the dispersion parameter. In Table \ref{sub_estimates}, one can see evident changes in $\hat\sigma$. Although the period is divided by the changes in $\sigma$,  other parameters $\lambda$ and $\mu$ are also estimated differently in each sub-period. 
 Here, it is noteworthy that the difference between the ML and the MDPD estimates of $\sigma$ is comparatively large in  the first and the last sub-periods. Since MLE and MDPDE tend to yield similar estimates when the portion of outliers is small, we can surmise that the first and the last periods include some outliers that may affect the ML estimates. 
The estimated change-points and $\hat\mu$ are displayed in Figure \ref{chgpts}, where the red and blue dashed lines stand for $\hat\mu$ by MLE and MDPDE with $\A=0.1$, respectively.

Finally, we  compare the OU processes without and with parameter changes in terms of the superiority in forecasting performance. Hereafter, we denote the processes with and without changes by \OUchg$ $  and \OU, respectively. Forecasting using the process with changes means that predicted values are obtained using the data from the last change point, i.e., Mar 3, 2016 in the present analysis. We use the Euler approximation in (\ref{Euler}) to calculate one-step-ahead forecasts for the last four months, total 78 observations, as follows:
\begin{eqnarray}\label{predict}
\hat{X}_{t_{s+1}} = X_{t_s}+\hat\lambda_{t_s}(X_{t_s} - \hat\mu_{t_s} )h_n,
\end{eqnarray}
where ($\hat\lambda_{t_s}, \hat\mu_{t_s}$) is an estimate based on the data up to $X_{t_s}$. 95\% prediction interval (PI) is given by $\hat{X}_{t_{s+1}}\pm 2 \hat\sigma_{t_s} \sqrt{h_n}$. The following root mean squared error (RMSE) and the root mean squared percentage error (RMSPE) are employed to evaluate forecasting performance:
\[ \text{RMSE}=\sqrt{\frac{1}{78}\sum_{s=660}^{737}  (X_{t_{s}}- \hat{X}_{t_{s}})^2}\qquad \text{and} \qquad
\text{RMSPE}=\sqrt{\frac{1}{78}\sum_{s=660}^{737} \bigg (\frac{X_{t_{s}}- \hat{X}_{t_{s}}}{X_{t_{s}}}\bigg)^2},\]
where $X_{t_{660}}$ is the index at Sep 1, 2017.

\begin{table}[!h]
  \centering
  \caption{\small ML and MDPD estimates for each sub-period}\vspace{0.1cm}
  \tabcolsep=6pt
\renewcommand{\arraystretch}{0.9}
   {\footnotesize
    \begin{tabular}{ccccrrr}
    \toprule
    \multicolumn{3}{c}{Period} & MDPDE & \multicolumn{1}{c}{$\hat\lambda$} & \multicolumn{1}{c}{$\hat\mu$} & \multicolumn{1}{c}{$\hat\sigma$} \\
    \midrule
    Jan 2, 2015 & $\sim$ &  Aug 25, 2015 & MLE  & 14.93 & 14.23 & 16.57 \\
          &       &       & $\A=0.1$ & 14.23 & 13.05 & 7.88 \\
          &       &       & $\A=0.2$ & 13.99 & 12.87 & 7.53 \\
          &       &       & $\A=0.3$ & 13.53 & 12.76 & 7.37 \\
          &       &       & $\A=0.4$ & 13.11 & 12.68 & 7.28 \\
          &       &       & $\A=0.5$ & 12.72 & 12.62 & 7.23 \\
          &       &       & $\A=1.0$ & 11.38 & 12.33 & 7.24 \\
    \midrule
    Aug 26, 2015 & $\sim$ & Sep 22, 2015 & MLE  & 177.02 & 20.52 & 25.80 \\
          &       &       & $\A=0.1$ & 180.32 & 20.55 & 26.23 \\
          &       &       & $\A=0.2$ & 183.21 & 20.58 & 26.58 \\
          &       &       & $\A=0.3$ & 185.77 & 20.61 & 26.84 \\
          &       &       & $\A=0.4$ & 188.06 & 20.63 & 27.03 \\
          &       &       & $\A=0.5$ & 190.14 & 20.66 & 27.13 \\
          &       &       & $\A=1.0$ & 198.43 & 20.78 & 26.62 \\
    \midrule
    Sep 23, 2015 & $\sim$ & Dec 10, 2015 & MLE  & 31.76 & 14.71 & 10.73 \\
          &       &       & $\A=0.1$ & 30.56 & 14.63 & 10.60 \\
          &       &       & $\A=0.2$ & 28.97 & 14.56 & 10.41 \\
          &       &       & $\A=0.3$ & 26.83 & 14.49 & 10.18 \\
          &       &       & $\A=0.4$ & 23.99 & 14.44 & 9.88 \\
          &       &       & $\A=0.5$ & 20.61 & 14.40 & 9.51 \\
          &       &       & $\A=1.0$ & 11.66 & 14.86 & 8.09 \\
    \midrule
    Dec 11, 2015 & $\sim$ & Mar 2, 2016 & MLE  & 43.43 & 17.88 & 23.28 \\
          &       &       & $\A=0.1$ & 44.94 & 17.55 & 22.56 \\
          &       &       & $\A=0.2$ & 46.46 & 17.28 & 21.83 \\
          &       &       & $\A=0.3$ & 47.81 & 17.08 & 21.24 \\
          &       &       & $\A=0.4$ & 48.97 & 16.93 & 20.88 \\
          &       &       & $\A=0.5$ & 50.06 & 16.83 & 20.70 \\
          &       &       & $\A=1.0$ & 60.70 & 16.51 & 20.20 \\
    \midrule
    Mar 3, 2016 & $\sim$ & Aug 31, 2017 & MLE  & 30.34 & 12.78 & 13.20 \\
          &       &       & $\A=0.1$ & 32.43 & 12.32 & 10.31 \\
          &       &       & $\A=0.2$ & 33.74 & 12.05 & 8.55 \\
          &       &       & $\A=0.3$ & 33.38 & 11.92 & 7.68 \\
          &       &       & $\A=0.4$ & 32.45 & 11.84 & 7.30 \\
          &       &       & $\A=0.5$ & 31.79 & 11.79 & 7.13 \\
          &       &       & $\A=1.0$ & 30.64 & 11.64 & 6.99 \\
    \midrule
    \midrule
    Jan 2, 2015 & $\sim$ & Aug 31, 2017 & MLE  & 17.85 & 13.75 & 15.87 \\
          &       &       & $\A=0.1$ & 20.30 & 13.01 & 12.24 \\
          &       &       & $\A=0.2$ & 20.21 & 12.60 & 10.45 \\
          &       &       & $\A=0.3$ & 20.25 & 12.35 & 9.28 \\
          &       &       & $\A=0.4$ & 20.01 & 12.20 & 8.61 \\
          &       &       & $\A=0.5$ & 19.68 & 12.12 & 8.26 \\
          &       &       & $\A=1.0$ & 19.65 & 11.93 & 7.93 \\
    \bottomrule
    \end{tabular}%
    }
  \label{sub_estimates}%
\end{table}%
The forecasting errors and the number of the observations included in 95\% PIs  are presented in Table \ref{fore_error}. The values in the left sub-table are obtained using the \OU$ $ and the data from Jan 2, 2015. On the other hand, as aforementioned, the one-step-ahead forecasts for the right sub-table are calculated using the data after Mar 2, 2016. The results in Table \ref{fore_error} show that the \OUchg$ $ outperforms the \OU. All the values of RMSE and RMSPE in the right sub-table are less than the corresponding values in the left sub-table. The \OU$ $ estimated by the MLE is shown to yield worst performance and the \OUchg$ $ estimated by the MDPDE with $\A=0.1$ and $\A=0.2$ show best performances in terms for RMSE and RMSPE, respectively. It is important to note that the \OUchg$ $ estimated by the MLE is superior to the \OU$ $ by the MDPDE, implying that the improvement of the forecasting performance by considering parameter changes is greater than by just using the robust estimator. Even though the predicted values are calculated depending only on the drift parameter estimate $(\hat\lambda, \hat\mu)$, the forecasting results above strongly indicate that the \OUchg$ $ is better fitted to the data.

\begin{table}[t]
  \centering
    \tabcolsep=7pt
    \caption{\small 1-step-ahead forecasting errors for the models without and with change-points}\vspace{0.1cm}
  {\footnotesize
    \begin{tabular}{ccccccccc}
    \multicolumn{4}{c}{OU process without parameter change} &       & \multicolumn{4}{c}{OU process with parameter changes} \\
\cmidrule{1-4}\cmidrule{6-9}    MDPDE  & RMSE  & RMSPE & \#     &       &  MDPDE & RMSE  & RMSPE & \# \\
\cmidrule{1-4}\cmidrule{6-9}
  MLE  & \textcolor{red}{0.6171} & \textcolor{red}{0.0489} & 78    &       & MLE  & 0.6035 & 0.0475 & 76 \\
   $\A=0.1$   & 0.6092 & 0.0480 & 76    &       & $\A=0.1$  & \textcolor{blue}{0.6014} & 0.0469 & 76 \\
   $\A=0.2$   & 0.6072 & 0.0477 & 76    &       & $\A=0.2$  & 0.6028 & \textcolor{blue}{0.0468} & 75 \\
   $\A=0.3$   & 0.6068 & 0.0475 & 74    &       & $\A=0.3$  & 0.6042 & 0.0468 & 73 \\
   $\A=0.4$   & 0.6070 & 0.0475 & 73    &       & $\A=0.4$  & 0.6051 & 0.0468 & 71 \\
   $\A=0.5$   & 0.6074 & 0.0475 & 73    &       & $\A=0.5$  & 0.6057 & 0.0469 & 70 \\
   $\A=1.0$   & 0.6081 & 0.0475 & 73    &       & $\A=1.0$  & 0.6075 & 0.0470 & 70 \\
\cmidrule{1-4}\cmidrule{6-9}
\multicolumn{9}{r}{\# denotes the number of observations included in 95\% prediction intervals.}
  \end{tabular}%
  }
  \label{fore_error}%
\end{table}%

Figure \ref{pred} displays the predicted values and 95\% PIs of the \OU$ $ with the ML estimates (left) and the \OUchg$ $ with the MDPD estimates by $\A=0.1$ (right). Although the PI of the \OU$ $ includes all the observations, it produces comparatively longer intervals.  The average lengths of 95\% PIs in the left and right sub-figures  are 3.99 and 2.63, respectively. 75 observations (97.4\%) are included in the PIs of the \OUchg, indicating that the process with changes produces reasonable PIs.

Our empirical findings support that the series are partitioned validly by the proposed test. The estimates in each sub-period are significantly different and the forecasting based on the last sub-period shows better performances. Political and economical events or crises often cause deviating observations in financial data and can also lead to structural changes in underlying models. Our analysis as well as simulation study demonstrates that in such situation where data includes seemingly outliers, the proposed test can effectively detect parameter changes that the existing tests may miss, hence improving forecasting performance.

\begin{figure}[!t]
\includegraphics[height=0.5\textwidth,width=1\textwidth]{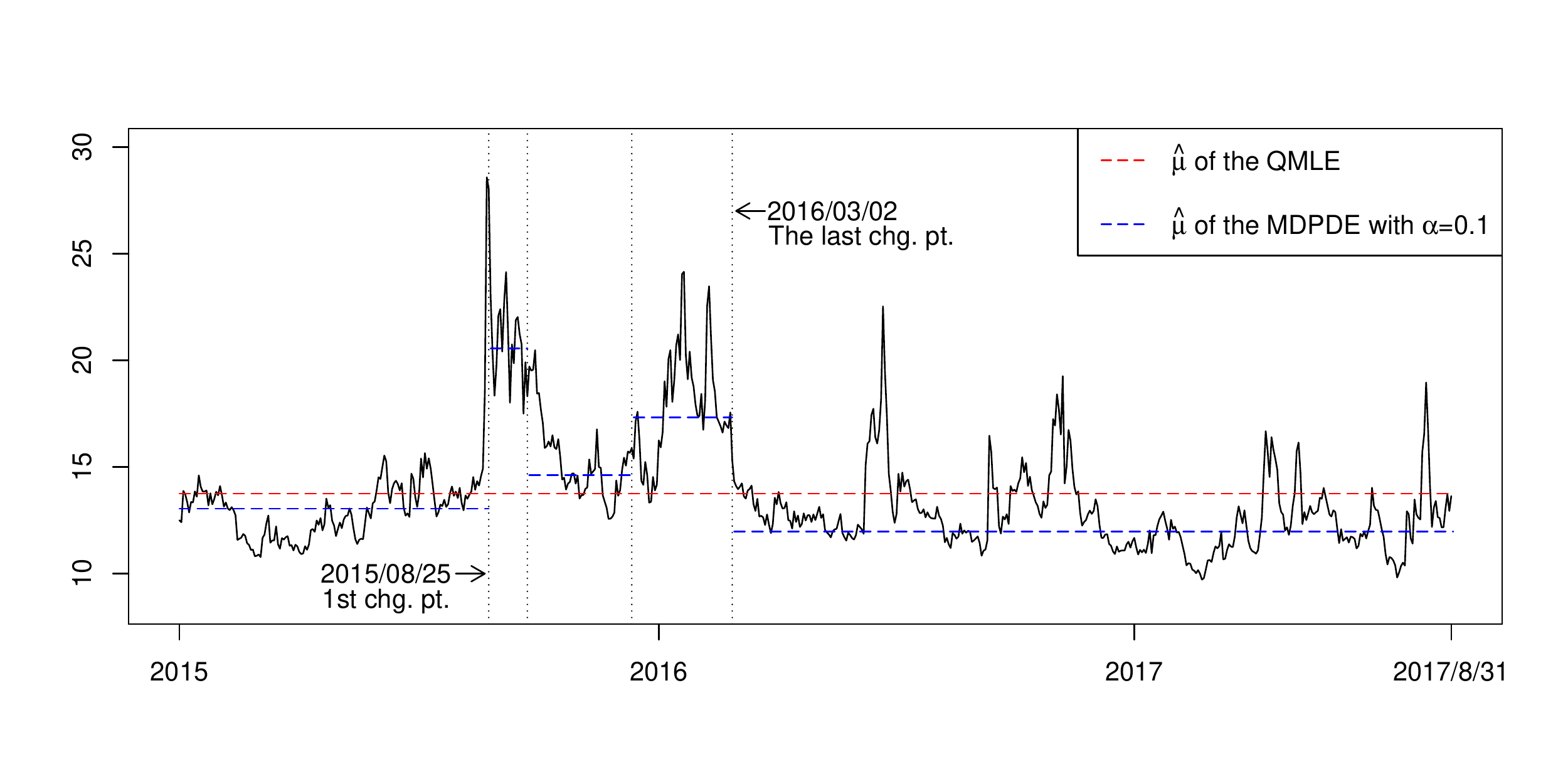}\vspace{-0.9cm}
\caption{\small VKOSPI200 index series up to  Aug 31, 2017 and the estimated change-points}\label{chgpts}
\vspace{1cm}
\includegraphics[height=0.5\textwidth,width=1\textwidth]{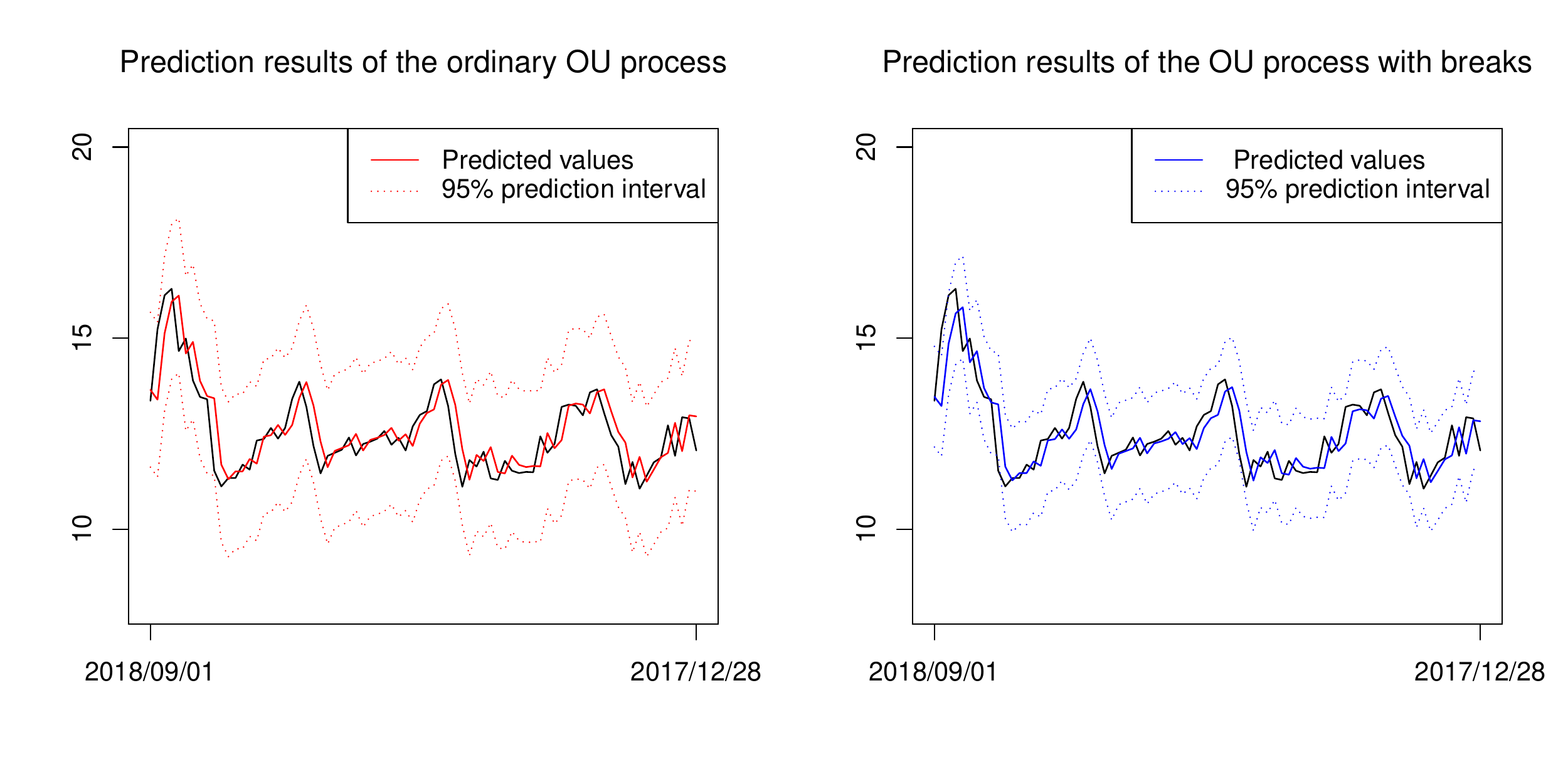}\vspace{-0.9cm}
\caption{\small The predicted values and 95\% prediction intervlas. The ordinary OU process is estimated by the MLE (left) and the process with changes is  by the MDPDE with $\A=0.1$ (right).}\label{pred}
\end{figure}

\section{Concluding remarks}
We have proposed a robust test for dispersion parameter constancy in discretely observed diffusion
processes. The idea used to construct the test is simple and easy to implement: the residuals are calculated using a robust estimate and then a CUSUM test statistics are constructed based on the truncated ones of squared residuals. The limiting null distribution of the proposed test is established and a simulation study demonstrates the promising performance of our test in the presence of outliers. The proposed test possesses a strong robust property against
outliers, whereas the naive CUSUM test is observed to be severely damaged particularly when the sampling interval is short.
Given the situations that high-frequency data have often been obtained, our test will be a good alternative to test for parameter change in such cases.

The extension to general diffusion processes such as $dX_t = a(X_t,\theta) dt+b(X_t,\sigma) dW_t$
is of natural interest. Our results are focused on diffusion processes, but we anticipate that our procedure can be applied to other time series models such as ARMA models and GARCH-type models. Once a robust estimator is given for each model, the same procedure can be adapted to construct test statistics. If one prove the results corresponding to Lemmas \ref{L3} and \ref{L4} below, the same asymptotic result in Theorem \ref{T2} will be obtained under other time series models. We leave these issues as possible topics of future research.

\section{Proofs}
Hereafter, we shall use the
relation $A_n \lesssim B_n$, where $A_n$ and $B_n$ are nonnegative,
to mean that $A_n \leq C B_n$ for some constant $C>0$ and drop the $\alpha$ in $(\hat{\theta}_n^\A, \hat{\sigma}_n^\A)$ and $\hat{Z}_{\A,i}$ for notational simplicity. Further, we denote
\begin{eqnarray*}
Z_i=\frac{1}{\sqrt{h_n}}\big(W_{t_i}-W_{t_{i-1}})\quad and \quad
\Delta_i=\int_{t_{i-1}}^{t_i}\big\{a(X_s,\theta_0)-a(X_{t_{i-1}},\theta_0)\big\}ds.
\end{eqnarray*}
Then, $Z_i$'s are i.i.d. random variables from $N(0,1)$ and, according to Lemma 1 in Lee and Song (2013), we have
\begin{eqnarray}\label{P0}
\max_{1\leq i \leq n} E(|\Delta_i|^k) \lesssim h_n^{1.5k}\quad  \textrm{for any } k\in \mathbb{N}.
\end{eqnarray}

\begin{lm}\label{L1}
Suppose that {\bf A1} and {\bf A3} hold. If $nh_n^2 \rightarrow 0$, then
\begin{eqnarray*}
n^q h_n^m\max_{ i \leq
n}\big|(1+|X_{t_{i-1}}|)^C\,Z_i^k\,\Delta_i^l\big|=o(1)\quad
\textit{a.s.},
\end{eqnarray*}
where $k,l \in \{0,1,2,\cdots\}$ and $m>-1.5l+2q$.
\end{lm}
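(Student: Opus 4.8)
Set $Y_n := n^q h_n^m \max_{i\le n}\big|(1+|X_{t_{i-1}}|)^C Z_i^k \Delta_i^l\big|\ge 0$. Since $\{h_n\}$ is deterministic, $n^q h_n^m$ is a scalar, and it suffices by the first Borel--Cantelli lemma to prove that $\sum_{n\ge 1}P(Y_n>\varepsilon)<\infty$ for every $\varepsilon>0$: this gives $\limsup_n Y_n\le\varepsilon$ a.s. for each $\varepsilon$, hence $Y_n\to 0$ a.s. The plan is to replace the maximum by a sum via the union bound, apply Markov's inequality with a large exponent $r$, estimate the resulting moment using {\bf A3}, the Gaussianity of $Z_i$ and the bound (\ref{P0}), and finally choose $r$ large, exploiting $nh_n^2\to 0$ and $m>-1.5l+2q$.

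Fix $\varepsilon>0$ and $r\in\mathbb N$. By the union bound and Markov's inequality,
\[
P(Y_n>\varepsilon)\le\sum_{i=1}^n P\!\Big(\big|(1+|X_{t_{i-1}}|)^C Z_i^k\Delta_i^l\big|>\frac{\varepsilon}{n^q h_n^m}\Big)\le\frac{n^{qr}h_n^{mr}}{\varepsilon^r}\sum_{i=1}^n E\big[(1+|X_{t_{i-1}}|)^{Cr}|Z_i|^{kr}|\Delta_i|^{lr}\big].
\]
To control the $i$th expectation I would apply the Cauchy--Schwarz inequality and use that $X_{t_{i-1}}$ is $\mathcal F_{t_{i-1}}$-measurable, hence independent of $Z_i$:
\[
E\big[(1+|X_{t_{i-1}}|)^{Cr}|Z_i|^{kr}|\Delta_i|^{lr}\big]\le\Big(E(1+|X_{t_{i-1}}|)^{2Cr}\cdot E|Z_i|^{2kr}\Big)^{1/2}\Big(E|\Delta_i|^{2lr}\Big)^{1/2}.
\]
Here $\sup_i E(1+|X_{t_{i-1}}|)^{2Cr}<\infty$ by {\bf A3}, $E|Z_i|^{2kr}$ is a finite constant since $Z_i\sim N(0,1)$, and $E|\Delta_i|^{2lr}\lesssim h_n^{3lr}$ by (\ref{P0}); hence the expectation is $\lesssim h_n^{1.5lr}$ uniformly in $i\le n$. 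Summing the $n$ terms,
\[
P(Y_n>\varepsilon)\lesssim\varepsilon^{-r}\,n^{qr+1}\,h_n^{(m+1.5l)r}.
\]

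Since $nh_n^2\to 0$ we have $h_n\le n^{-1/2}$ for all $n$ large, and as $m+1.5l>2q\ge 0$ the exponent $(m+1.5l)r$ is nonnegative, so $h_n^{(m+1.5l)r}\le n^{-(m+1.5l)r/2}$ and
\[
P(Y_n>\varepsilon)\lesssim\varepsilon^{-r}\,n^{\,1+r\left(q-(m+1.5l)/2\right)}.
\]
The hypothesis $m>-1.5l+2q$ is exactly $q-(m+1.5l)/2<0$, so $1+r\left(q-(m+1.5l)/2\right)\to-\infty$ as $r\to\infty$; fixing $r$ so large that this quantity is $<-1$ makes $\sum_n P(Y_n>\varepsilon)<\infty$, and Borel--Cantelli gives $Y_n\to 0$ a.s.

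I expect the only slightly delicate points to be: (i) decoupling the three mutually dependent factors $(1+|X_{t_{i-1}}|)^C$, $Z_i^k$ and $\Delta_i^l$ (the last two tied to the diffusion path on $(t_{i-1},t_i)$), for which the Cauchy--Schwarz step together with the independence $X_{t_{i-1}}\perp Z_i$ is exactly what is needed; and (ii) ensuring the moment bounds from {\bf A3} and (\ref{P0}) are \emph{uniform} in $i$, so that the sum over $i=1,\dots,n$ costs only a factor $n$. The remainder is a routine Markov/Borel--Cantelli argument in which the hypothesis $nh_n^2\to 0$ converts powers of $h_n$ into summable powers of $n$ and the admissible exponents are dictated precisely by $m>-1.5l+2q$.
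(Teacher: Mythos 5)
Your argument is correct and follows essentially the same route as the paper's proof: a union bound over $i\le n$, Markov's inequality with a large exponent, the uniform moment bound $\max_{i\le n}E\big|(1+|X_{t_{i-1}}|)^C Z_i^k\Delta_i^l\big|^d\lesssim h_n^{1.5ld}$ from {\bf A3} and (\ref{P0}), conversion of powers of $h_n$ into powers of $n$ via $nh_n^2\to0$, and Borel--Cantelli after choosing the exponent large enough that the series is summable. The only difference is that you spell out the Cauchy--Schwarz/independence step used to decouple the three factors, which the paper leaves implicit.
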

\begin{proof}
In view of {\bf A3} and (\ref{P0}), we have that for any
$\epsilon, d>0$,
\begin{eqnarray*}
\sum_{n=1}^\infty P\Big( n^q h_n^m\max_{ i \leq n}\big|(1+|X_{t_{i-1}}|)^C Z_i^k\Delta_i^l\big|>\epsilon\Big)
&\leq& \sum_{n=1}^\infty \frac{1}{\epsilon^d}n^{1+qd} h_n^{m d} \max_{i \leq n} \E\big|(1+|X_{t_{i-1}}|)^C Z_i^k\Delta_i^l\big|^d \\
&\les&  \sum_{n=1}^\infty n^{1+qd}h_n^{(1.5l+m)d}=\sum_{n=1}^\infty
o\big(n^{1-(1.5l+m-2q)d/2}\big).
\end{eqnarray*}
Since $1.5l+m-2q>0$, the lemma is yielded by choosing $d$ such that $1-(1.5l+m-2q)d/2 < -1$.
\end{proof}

\begin{lm}\label{L2}
Suppose that {\bf A1} - {\bf A3} hold and let $f(x)$ and $g_n(x)$ belong to $\mathcal{P}$. Assume further that
 $\pa_x f$ exists and belongs to $\mathcal{P}$; $g_n$ converges almost surely to $g \in \mathcal{P}$ and is dominated by some
function in $\mathcal{P}$. If $nh_n^2 \rightarrow 0$, then
\begin{eqnarray*}
\left| \frac{1}{n} \sum_{i=1}^ng_n(Z_i)
f(X_{t_{i-1}}) - E g(Z) \int f(x) \mu_0
(dx) \right| = o(1)\ \ a.s..
\end{eqnarray*}
\end{lm}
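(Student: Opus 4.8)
The plan is to separate the sum into a centered martingale part and an ergodic average. Put $\gamma_n:=E[g_n(Z)]$ with $Z\sim N(0,1)$. Since $Z_i=(W_{t_i}-W_{t_{i-1}})/\sqrt{h_n}$ is independent of $\mathcal{F}_{t_{i-1}}:=\sigma(W_s\,:\,s\le t_{i-1})$ while $X_{t_{i-1}}$ is $\mathcal{F}_{t_{i-1}}$-measurable, I would write
\[
\frac1n\sum_{i=1}^n g_n(Z_i)\,f(X_{t_{i-1}})
=\underbrace{\frac1n\sum_{i=1}^n\big[g_n(Z_i)-\gamma_n\big]f(X_{t_{i-1}})}_{=:A_n}
+\gamma_n\underbrace{\frac1n\sum_{i=1}^n f(X_{t_{i-1}})}_{=:B_n}.
\]
Because $g_n\to g$ (a.e.) and $|g_n|\le G$ for some $G\in\mathcal{P}$, and the standard normal law has all moments, dominated convergence gives $\gamma_n\to\gamma:=E[g(Z)]$. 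Hence it suffices to prove $A_n\to0$ a.s.\ and $B_n\to\int f\,d\mu_0$ a.s.

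For $A_n$, set $M_i^{(n)}:=\big[g_n(Z_i)-\gamma_n\big]f(X_{t_{i-1}})$. By the independence just noted, $E\big[M_i^{(n)}\mid\mathcal{F}_{t_{i-1}}\big]=0$, so for each $n$ the array $(M_i^{(n)},\mathcal{F}_{t_i})_{1\le i\le n}$ is a martingale difference sequence. Moreover $|g_n|\le G\in\mathcal{P}$ and $|f(x)|\le C(1+|x|^C)$, so together with {\bf A3} one gets $\sup_n\max_{i\le n}E\big[(M_i^{(n)})^4\big]<\infty$. Burkholder's inequality then yields
\[
E\Big[\Big(\sum_{i=1}^n M_i^{(n)}\Big)^4\Big]\lesssim E\Big[\Big(\sum_{i=1}^n (M_i^{(n)})^2\Big)^2\Big]=\sum_{i,j\le n}E\big[(M_i^{(n)})^2(M_j^{(n)})^2\big]\lesssim n^2,
\]
so $E[A_n^4]\lesssim n^{-2}$; this is summable in $n$, so Markov's inequality and the Borel--Cantelli lemma give $A_n\to0$ a.s.

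For $B_n$, I would invoke the a.s.\ convergence $\frac1n\sum_{i=1}^n f(X_{t_{i-1}})\to\int f\,d\mu_0$ of the ergodic average along the sampling grid; this is standard for discretely observed ergodic diffusions and follows by combining the ergodic theorem for the continuous process ({\bf A2}, applicable since $f\in\mathcal{P}$ is $\mu_0$-integrable) with the Riemann-sum estimate
\[
\Big|\frac1n\sum_{i=1}^n f(X_{t_{i-1}})-\frac1{nh_n}\int_0^{nh_n}\!f(X_s)\,ds\Big|
\le\max_{1\le i\le n}\ \sup_{t_{i-1}\le s\le t_i}\big|f(X_{t_{i-1}})-f(X_s)\big|,
\]
whose right-hand side is $o(1)$ a.s.\ by the mean value theorem (using $\pa_x f\in\mathcal{P}$), {\bf A1}, {\bf A3}, and the a.s.\ modulus of continuity of $W$ on $[0,nh_n]$ at scale $h_n$; here the hypothesis $nh_n^2\to0$ (hence $h_n=o(n^{-1/2})$) makes this error vanish. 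Alternatively one can bound the error directly via $X_{t_i}-X_{t_{i-1}}=\sigma\sqrt{h_n}Z_i+a(X_{t_{i-1}},\theta_0)h_n+\Delta_i$ and Lemma \ref{L1} together with (\ref{P0}). Combining, $A_n+\gamma_n B_n\to E[g(Z)]\int f\,d\mu_0$ a.s., which is the claim.

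The main obstacle is the handling of $B_n$: since the mesh $h_n$ varies with $n$, one cannot simply quote an SLLN for a fixed ergodic chain, and must instead compare the grid average with the time-average $\frac1{nh_n}\int_0^{nh_n}f(X_s)\,ds$ and control the discretization error uniformly over the $n$ subintervals — exactly where {\bf A1}, {\bf A3}, the polynomial growth of $\pa_x f$, and the rate $nh_n^2\to0$ all enter. By contrast the martingale part $A_n$ is routine once one notes $Z_i\perp\mathcal{F}_{t_{i-1}}$ and the uniform fourth-moment bound.
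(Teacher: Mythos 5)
Your proposal follows essentially the same route as the paper: the same splitting into a martingale-difference part (handled by Burkholder's inequality plus Borel--Cantelli) and an ergodic Riemann-sum part compared with $\frac{1}{nh_n}\int_0^{nh_n}f(X_s)\,ds$, with $E[g_n(Z)]\to E[g(Z)]$ obtained by dominated convergence. The only real difference is cosmetic: the paper controls the discretization error $\frac{1}{n}\sum_{i}h_n^{-1}\int_{t_{i-1}}^{t_i}\{f(X_{t_{i-1}})-f(X_s)\}\,ds$ via an $L^{2r}$ moment bound of order $O(h_n^r)=o(n^{-r/2})$ (using Jensen, Cauchy--Schwarz and $E|X_s-X_{t_{i-1}}|^k\lesssim h_n^{k/2}$) followed by Borel--Cantelli, rather than via the pathwise modulus of continuity of $W$, which sidesteps the need to control $\sup_{s\le nh_n}|X_s|$ almost surely.
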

\begin{proof}
In view of ergodic property, we get
\begin{eqnarray}\label{L2.1}
\frac{1}{nh_n} \int_0^{nh_n} f(X_s) ds
\stackrel{a.s.}{\longrightarrow} \int f(x) \mu_0 (dx)\ \
\textrm{as} \ n\rightarrow \infty.
\end{eqnarray}
Using Jensen's inequality, Cauchy's inequality and $E\big|X_t-X_{t_{i-1}}\big|^k  \les h_n^{k/2}$ (cf. Kessler (1997)),
we have that for $r>0$
\begin{eqnarray*}
&&E \Big|\frac{1}{n}\sum_{i=1}^n
\frac{1}{h_n}\int_{t_{i-1}}^{t_i}
\big\{f(X_{t_{i-1}})-f(X_s) \big\}ds\Big|^{2r}\\
&& \leq \frac{1}{nh_n}\sum_{i=1}^n\int_{t_{i-1}}^{t_i}
\big\{E(X_{t_{i-1}}-X_s)^{4r}\big\}^{\frac{1}{2}}\Big[\int_0^1 E
\big\{\pa_x f (X_s +u(X_{t_{i-1}}-X_s))\big\}^{4r}du
\Big]^{\frac{1}{2}}ds\\
&&=O(h_n^r)=o(n^{-r/2}).
\end{eqnarray*}
Hence, we have for any $\epsilon>0$ and $r>2$,
\begin{eqnarray*}
\sum_{n=1}^\infty P \Big(\Big|\frac{1}{n}\sum_{i=1}^n
\frac{1}{h_n}\int_{t_{i-1}}^{t_i}
\big\{f(X_{t_{i-1}})-f(X_s) \big\}ds\Big|>\epsilon\Big) \les
\sum_{n=1}^\infty o(n^{-r/2}) <\infty,
\end{eqnarray*}
which together with (\ref{L2.1}) asserts
\begin{eqnarray}\label{L2.2}
 \frac{1}{n} \sum_{i=1}^n
f(X_{t_{i-1}}) \stackrel{a.s.}{\longrightarrow} \int f(x) \mu_0 (dx)\ \ \textrm{as} \ n\rightarrow \infty.
\end{eqnarray}
Next, let
\[ S_i:=\big\{g_n(Z_i)-E[g_n(Z_1)]\big\}f(X_{t_{i-1}}).\]
Then,  $\{S_i\}_{i=1}^n$ forms a martingale difference with respect to $\mathcal{G}_i=\sigma\{W_s : s \leq t_i\}$. Thus, it follows from Burkholder's inequality and Jensen's
inequality that for any $r>1$
\begin{eqnarray*}
E \Big|\frac{1}{n}\sum_{i=1}^n S_i\Big|^{2r} \leq
\frac{1}{n^{r+1}}\sum_{i=1}^n
E|S_i|^{2r}=O\big(n^{-r}\big),
\end{eqnarray*}
and consequently one can see that
\begin{eqnarray*}
\Big|\frac{1}{n}\sum_{i=1}^n g_n(Z_i) f(X_{t_{i-1}}) -
E[g_n(Z_1)]\frac{1}{n}\sum_{i=1}^n f(X_{t_{i-1}})\Big|=o(1)\ \ a.s.
\end{eqnarray*}
Since $E[g_n(Z_1)]$ converges to $E[g(Z_1)]$ by the dominated convergence theorem, the lemma is asserted from (\ref{L2.2}).
\end{proof}

\begin{lm}\label{L3}
Suppose that {\bf A0}-{\bf A4} hold. For a subset $S \subset \mathbb{R}$ and any monotone sequence of subsets $\{S_n\}$ with  $\lim_{n} S_n =S$,
if $nh_n^2\rightarrow 0$, then
\begin{eqnarray*}
\frac{1}{\sqrt{n}}\max_{1\leq k\leq n } \Big|\sum_{i=1}^k1_{S_n}(Z_i^2) (Z_i^2-\hat{Z}_i^2) -\frac{k}{n}\sum_{i=1}^n 1_{S_n}(Z_i^2)(Z_i^2-\hat{Z}_i^2) \Big|=o_P(1),
\end{eqnarray*}
where $1_A$ denotes the indicator function.
 \end{lm}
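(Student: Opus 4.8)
The plan is to substitute the exact form of the increment into $\hat{Z}_i$ and expand the square into three error terms of decreasing complexity. Writing $b_i:=a(X_{t_{i-1}},\theta_0)-a(X_{t_{i-1}},\hat{\theta}_n)$ and using $X_{t_i}-X_{t_{i-1}}=a(X_{t_{i-1}},\theta_0)h_n+\Delta_i+\sigma_0\sqrt{h_n}Z_i$, one gets $\sqrt{h_n}\,\hat{\sigma}_n\hat{Z}_i=\sigma_0\sqrt{h_n}Z_i+b_ih_n+\Delta_i$, hence
\begin{eqnarray*}
Z_i^2-\hat{Z}_i^2
&=& \Big(1-\frac{\sigma_0^2}{\hat{\sigma}_n^2}\Big)Z_i^2
-\frac{2\sigma_0}{\hat{\sigma}_n^2}\Big(b_i\sqrt{h_n}\,Z_i+\frac{\Delta_iZ_i}{\sqrt{h_n}}\Big)\\
&& {}-\frac{1}{\hat{\sigma}_n^2}\Big(b_i^2h_n+2b_i\Delta_i+\frac{\Delta_i^2}{h_n}\Big)\ =:\ A_i+B_i+C_i\,.
\end{eqnarray*}
Since the CUSUM functional $\xi\mapsto n^{-1/2}\max_{1\le k\le n}\big|\sum_{i\le k}\xi_i-\tfrac kn\sum_{i\le n}\xi_i\big|$ is sub-additive, it suffices to show that its values at $1_{S_n}(Z_i^2)A_i$, $1_{S_n}(Z_i^2)B_i$ and $1_{S_n}(Z_i^2)C_i$ are each $o_P(1)$ (only $0\le 1_{S_n}\le1$ and the evenness of $z\mapsto 1_{S_n}(z^2)$ are used here, not the monotonicity or convergence of $\{S_n\}$). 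I will repeatedly invoke: from {\bf A0}, $\hat{\sigma}_n-\sigma_0=O_P(n^{-1/2})$, $\hat{\theta}_n-\theta_0=O_P((nh_n)^{-1/2})$ and (as $\sigma_0\ge c$) $\hat{\sigma}_n^{-2}=O_P(1)$; the bound $|b_i|\lesssim(1+|X_{t_{i-1}}|)^C\|\hat{\theta}_n-\theta_0\|$ from a first-order Taylor expansion of $a(X_{t_{i-1}},\cdot)$ about $\theta_0$; the moment estimate (\ref{P0}) and {\bf A3}; and, crucially, that $Z_i$ is independent of $\mathcal G_{i-1}:=\sigma\{W_s:s\le t_{i-1}\}$ and symmetric, so $E[\,1_{S_n}(Z_i^2)Z_i\mid\mathcal G_{i-1}\,]=0$. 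For $A_i$ the scalar $1-\sigma_0^2/\hat{\sigma}_n^2=O_P(n^{-1/2})$ factors out, and the remaining CUSUM functional at $\eta_i:=1_{S_n}(Z_i^2)Z_i^2$ is $O_P(1)$: for fixed $n$ the $\eta_i$ are i.i.d.\ with $\mathrm{Var}(\eta_i)\le EZ_1^4=3$, so after centring at $E\eta_1$ (which cancels in the CUSUM) Doob's maximal inequality gives $n^{-1/2}\max_k|\sum_{i\le k}(\eta_i-E\eta_1)|=O_P(1)$ with a bound independent of $n$; hence the $A_i$-contribution is $O_P(n^{-1/2})=o_P(1)$.

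The $B_i$-term is the crux. After removing the $O_P(1)$ prefactor $2\sigma_0/\hat{\sigma}_n^2$, its $\Delta_i$-part contributes at most $2n^{-1/2}\sum_{i\le n}|\Delta_iZ_i|/\sqrt{h_n}$, whose expectation is $\lesssim n\,h_n^{3/2}/(n^{1/2}h_n^{1/2})=\sqrt{nh_n^2}\to0$ by Cauchy--Schwarz and (\ref{P0}). The $b_i$-part, however, \emph{cannot} be controlled by a crude absolute-value bound: that yields only $n^{-1/2}\sqrt{h_n}\sum_i|b_iZ_i|=O_P(\|\hat{\theta}_n-\theta_0\|\sqrt{h_n}\,n^{1/2})=O_P(1)$, and the martingale structure is what saves it. I would Taylor-expand $b_i=-\partial_\theta a(X_{t_{i-1}},\theta_0)^{\top}(\hat{\theta}_n-\theta_0)+r_i$ with $|r_i|\lesssim(1+|X_{t_{i-1}}|)^C\|\hat{\theta}_n-\theta_0\|^2$, so that $\sum_{i\le k}1_{S_n}(Z_i^2)b_i\sqrt{h_n}Z_i=-(\hat{\theta}_n-\theta_0)^{\top}\sqrt{h_n}\,M_k+(\text{remainder})$ with $M_k:=\sum_{i\le k}1_{S_n}(Z_i^2)\,\partial_\theta a(X_{t_{i-1}},\theta_0)\,Z_i$. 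Because $E[\,1_{S_n}(Z_i^2)Z_i\mid\mathcal G_{i-1}\,]=0$, the process $(M_k)$ is an $(\mathcal G_k)$-martingale with orthogonal increments and $E\|M_n\|^2\lesssim\sum_{i\le n}E(1+|X_{t_{i-1}}|)^{2C}\lesssim n$ (using $\partial_\theta a\in\mathcal P$ and {\bf A3}), so Doob gives $\max_k\|M_k\|=O_P(n^{1/2})$ and the leading contribution to the CUSUM functional is $O_P(\|\hat{\theta}_n-\theta_0\|\sqrt{h_n})=O_P(n^{-1/2})=o_P(1)$. The remainder is disposed of crudely: $n^{-1/2}\sum_{i\le n}|r_i|\sqrt{h_n}|Z_i|=O_P(\|\hat{\theta}_n-\theta_0\|^2\sqrt{h_n}\,n^{1/2})=O_P((nh_n)^{-1/2})=o_P(1)$ since $nh_n\to\infty$. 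Thus the $B_i$-contribution is $o_P(1)$.

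For $C_i$ I would bound the CUSUM functional by $2\hat{\sigma}_n^{-2}n^{-1/2}\sum_{i\le n}\big(b_i^2h_n+2|b_i\Delta_i|+\Delta_i^2/h_n\big)$ and check that each summand-sum is $o_P(1)$: $n^{-1/2}\sum b_i^2h_n=O_P(\|\hat{\theta}_n-\theta_0\|^2h_n\,n^{1/2})=O_P(n^{-1/2})$; $n^{-1/2}\sum|b_i\Delta_i|=O_P(\|\hat{\theta}_n-\theta_0\|\,n^{1/2}h_n^{3/2})=O_P(h_n)$ by Cauchy--Schwarz and (\ref{P0}); and $n^{-1/2}\sum\Delta_i^2/h_n$ has expectation $\lesssim n^{1/2}h_n^2\to0$ since $nh_n^2\to0$ (this bound also follows at once from Lemma~\ref{L1} with $q=\tfrac12$, $m=-1$, $k=0$, $l=2$, $C=0$). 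Collecting the three contributions completes the proof. The one genuine obstacle is the term $b_i\sqrt{h_n}Z_i$ inside $B_i$: since $\hat{\theta}_n$, and therefore $b_i$, depends on the entire trajectory, $\sum_{i\le k}1_{S_n}(Z_i^2)b_iZ_i$ is \emph{not} a martingale, and one must first peel off the factor $\hat{\theta}_n-\theta_0$ by Taylor expansion before Doob's inequality becomes available for the residual martingale $M_k$; everything else reduces to the elementary moment estimates above together with (\ref{P0}).
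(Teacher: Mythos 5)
Your proof is correct, and it rests on exactly the same algebraic decomposition as the paper: expanding $\hat{Z}_i=\frac{\sigma_0}{\hat{\sigma}_n}Z_i+\frac{b_i\sqrt{h_n}}{\hat{\sigma}_n}+\frac{\Delta_i}{\hat{\sigma}_n\sqrt{h_n}}$, your $A_i$, $C_i$ and the two halves of $B_i$ are precisely the paper's leading term, $J_{2,i}$, $J_{1,i}$ and $J_{3,i}$, and your moment bounds for the $\Delta_i$-terms (Cauchy--Schwarz with (\ref{P0}), and Lemma \ref{L1} for $\sqrt{n}h_n^{-1}\max_i\Delta_i^2$) coincide with the paper's. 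Where you genuinely diverge is in the control of the two ``martingale'' pieces. The paper pulls out the $O_P(n^{-1/2})$ (resp.\ $O_P((nh_n)^{-1/2})$) scalar and then shows $\max_k\frac{k}{n}\big|\frac1k\sum_{i\le k}\xi_i-\frac1n\sum_{i\le n}\xi_i\big|=o(1)$ a.s.\ via the almost-sure convergence of the Ces\`aro averages supplied by Lemma \ref{L2} (split at $k=\sqrt{n}$); you instead apply Doob's $L^2$ maximal inequality to the centred i.i.d.\ sums $\sum_{i\le k}(\eta_i-E\eta_1)$ and to the martingale $M_k=\sum_{i\le k}1_{S_n}(Z_i^2)\partial_\theta a(X_{t_{i-1}},\theta_0)Z_i$, getting $O_P(\sqrt n)$ maxima with constants uniform in $n$. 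Your route buys two things: it never uses the monotone convergence $S_n\to S$ (which the paper needs to feed $g_n\to g$ into Lemma \ref{L2}), and it makes fully explicit the step the paper dismisses with ``in a similar fashion'' for $J_{1,i}$ --- namely that the crude absolute-value bound only gives $O_P(1)$ and one must peel off $\hat{\theta}_n-\theta_0$ by a Taylor expansion before any maximal inequality applies; this is indeed the crux, and your diagnosis of it is exactly right. The only caveat is that your quadratic remainder bound $|r_i|\lesssim(1+|X_{t_{i-1}}|)^C\|\hat{\theta}_n-\theta_0\|^2$ uses second $\theta$-derivatives of $a$ in $\mathcal P$, which formally comes from {\bf A6} rather than {\bf A0}--{\bf A4}; but even a first-order mean-value bound requires $\theta$-smoothness not listed in {\bf A0}--{\bf A4}, so the paper's own argument carries the same implicit assumption and this is not a defect specific to your write-up.
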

\begin{proof}
Since
\[\hat{Z}_i=\frac{\sigma_0}{\hat{\sigma}_n}Z_i+\frac{1}{\hat{\sigma}_n}(a_{i-1}(\theta_0)-a_{i-1}(\hat{\theta}_n))\sqrt{h_n}
+\frac{\Delta_i}{\hat{\sigma}_n \sqrt{h_n}},\]
we can express that
\begin{eqnarray}\label{L3.1}
\hat{Z}_i^2-Z_i^2&=&\Big(\frac{\sigma^2_0}{\hat{\sigma}^2_n}-1\Big)Z_i^2+J_{1,i}+J_{2,i}+J_{3,i},
\end{eqnarray}
where
\begin{eqnarray*}
J_{1,i}&=&2\frac{\sigma_0}{\hat{\sigma}^2_n}(a_{i-1}(\theta_0)-a_{i-1}(\hat{\theta}_n))Z_i\sqrt{h_n}, \\
J_{2,i}&=&\frac{1}{\hat{\sigma}^2_n}\Big[(a_{i-1}(\theta_0)-a_{i-1}(\hat{\theta}_n)\sqrt{h_n}+\frac{\Delta_i}{\sqrt{h_n}}  \Big]^2,\\
J_{3,i}&=& 2\frac{\sigma_0}{\hat{\sigma}^2_n} \frac{\Delta_iZ_i}{\sqrt{h_n}}.
\end{eqnarray*}
\noindent
First, note that
\begin{eqnarray}\label{L3.2}
&&\frac{1}{\sqrt{n}}\max_{1\leq k\leq n } \Big|\sum_{i=1}^k  1_{S_n}(Z_i^2)\Big(\frac{\sigma_0^2}{\hat{\sigma}^2_n}-1\Big)Z_i^2 -\frac{k}{n}\sum_{i=1}^n 1_{S_n}(Z_i^2)\Big(\frac{\sigma^2_0}{\hat{\sigma}^2_n}-1\Big)Z_i^2 \Big| \\
& \lesssim &\sqrt{n} |\hat{\sigma}_n -\sigma_0| \max_{1\leq k\leq n } \frac{k}{n}\Big| \frac{1}{k}\sum_{i=1}^k 1_{S_n}(Z_i^2)Z_i^2-\frac{1}{n}\sum_{i=1}^n 1_{S_n}(Z_i^2)Z_i^2  \Big|. \nonumber
\end{eqnarray}
Since $\frac{1}{n}\sum_{i=1}^n 1_{S_n}(Z_i^2)Z_i^2$ converges almost surely by Lemma \ref{L2}, we can obtain that
\begin{eqnarray*}
&&\max_{1\leq k\leq \sqrt{n} } \frac{k}{n}\Big| \frac{1}{k}\sum_{i=1}^k 1_{S_k}(Z_i^2)Z_i^2-\frac{1}{n}\sum_{i=1}^n 1_{S_n}(Z_i^2)Z_i^2  \Big|=o(1)\ a.s.,\\
&&\max_{\sqrt{n}\leq k\leq n }\Big| \frac{1}{k}\sum_{i=1}^k 1_{S_k}(Z_i^2)Z_i^2-\frac{1}{n}\sum_{i=1}^n 1_{S_n}(Z_i^2)Z_i^2  \Big|=o(1)\ a.s.
\end{eqnarray*}
which together with $\sqrt{n} |\hat{\sigma}_n -\sigma_0|=O_P(1)$ yields that(\ref{L3.2}) is $o_P(1)$. In a similar fashion, by using
$\sqrt{nh_n} ||\hat{\theta}_n -\theta_0||=O_P(1)$,
one can show that
\begin{eqnarray*}
\frac{1}{\sqrt{n}}\max_{1\leq k\leq n } \Big|\sum_{i=1}^k  1_{S_n}(Z_i^2)J_{1,i} -\frac{k}{n}\sum_{i=1}^n 1_{S_n}(Z_i^2)J_{1,i} \Big| =o_P(1).
\end{eqnarray*}

Next, to show that the remaining terms in (\ref{L3.1}) are negligible, we note that
\[\frac{1}{\sqrt{n}}\max_{1\leq k\leq n } \Big|\sum_{i=1}^k  1_{S_n}(Z_i^2)(J_{2,i}+J_{3,i}) -\frac{k}{n}\sum_{i=1}^n 1_{S_n}(Z_i^2)(J_{2,i}+J_{3,i}) \Big|
\lesssim \frac{1}{\sqrt{n}}  \sum_{i=1}^n \big|J_{2,i}\big|+\frac{1}{\sqrt{n}}  \sum_{i=1}^n \big|J_{3,i}\big|.\]
Using $\sqrt{nh_n}||\hat{\theta}_n-\theta_0||=O_P(1)$ and Lemma \ref{L1}, we have
\begin{eqnarray}\label{L3.3}
\frac{1}{\sqrt{n}}  \sum_{i=1}^n \big|J_{2,i}\big| \lesssim nh_n||\hat{\theta}_n-\theta_0||^2\frac{1}{\sqrt{n}}\max_{1\leq k\leq n }(1+|X_{i-1}|)^C
+\frac{\sqrt{n}}{h_n} \max_{1\leq k\leq n } \Delta_i^2=o_P(1).
\end{eqnarray}
Also, it follows from (\ref{P0}) that
\begin{eqnarray*}
\frac{1}{\sqrt{n}}  \sum_{i=1}^n E\big|J_{3,i}\big| \lesssim \frac{1}{\sqrt{nh_n}}\sum_{i=1}^n\sqrt{E\Delta_i^2 EZ_i^2} = O(\sqrt{n}h_n),
\end{eqnarray*}
which implies
\begin{eqnarray*}
\frac{1}{\sqrt{n}}  \sum_{i=1}^n \big|J_{3,i}\big|=o_P(1).
\end{eqnarray*}
This completes the proof.
\end{proof}

\begin{lm}\label{L4}
Suppose that {\bf A0}-{\bf A4} hold. For any $q < 0.5$, if $nh_n^2\rightarrow 0$, then
\begin{eqnarray*}
n^q\max_{1\leq k\leq n } \big|Z_i^2-\hat{Z}_i^2\big|=o_P(1).
\end{eqnarray*}
\end{lm}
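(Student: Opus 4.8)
The plan is to build on the algebraic identity (\ref{L3.1}) already obtained in the proof of Lemma \ref{L3},
\[
\hat{Z}_i^2-Z_i^2=\Big(\tfrac{\sigma_0^2}{\hat{\sigma}_n^2}-1\Big)Z_i^2+J_{1,i}+J_{2,i}+J_{3,i},
\]
and to show that $n^q$ times the maximum over $1\le i\le n$ of each of the four summands is $o_P(1)$ whenever $q<1/2$. Two ingredients recur. First, the stochastic-boundedness rates $\sqrt{n}\,|\hat{\sigma}_n-\sigma_0|=O_P(1)$ and $\sqrt{nh_n}\,\|\hat{\theta}_n-\theta_0\|=O_P(1)$ (guaranteed by {\bf A0}, and for the MDPDE by {\bf A5}--{\bf A8} as in Lee and Song (2013)), together with the fact that $\hat{\sigma}_n\ge c>0$, so that $1/\hat{\sigma}_n^2$ is bounded. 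Second, the crude maximal bounds $\max_{1\le i\le n}Z_i^2=o_P(n^\epsilon)$ and $\max_{1\le i\le n}(1+|X_{t_{i-1}}|)^{C}|Z_i|=o_P(n^\epsilon)$, valid for every $\epsilon>0$; these follow from Markov's inequality with a high moment, using the Gaussian tail of $Z_i$, the moment bound {\bf A3}, and the independence of $Z_i$ and $X_{t_{i-1}}$ (equivalently, one may invoke Lemma \ref{L1} with $l=0$ and $q$ arbitrarily small).

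With these in hand I would treat the four terms in turn. For the leading term, $n^q|\sigma_0^2/\hat{\sigma}_n^2-1|\max_iZ_i^2=n^q\cdot O_P(n^{-1/2})\cdot o_P(n^\epsilon)=o_P(1)$ once $\epsilon<\tfrac12-q$. For $J_{1,i}$, the mean value theorem together with {\bf A6} gives $|J_{1,i}|\lesssim\|\hat{\theta}_n-\theta_0\|\,(1+|X_{t_{i-1}}|)^{C}|Z_i|\sqrt{h_n}$, whence $n^q\max_i|J_{1,i}|\lesssim n^{q-1/2}O_P(1)\max_i(1+|X_{t_{i-1}}|)^{C}|Z_i|=o_P(1)$. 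For $J_{2,i}$, writing $|J_{2,i}|\lesssim\|\hat{\theta}_n-\theta_0\|^2(1+|X_{t_{i-1}}|)^{2C}h_n+\Delta_i^2/h_n$, the first piece is $O_P(n^{q-1})o_P(n^\epsilon)=o_P(1)$, and the second piece, $n^qh_n^{-1}\max_i\Delta_i^2$, is $o(1)$ a.s.\ by Lemma \ref{L1} with $k=0$, $l=2$, $m=-1$ (admissible because $-1>-3+2q$ for $q<1$). Finally $|J_{3,i}|\lesssim|\Delta_i||Z_i|/\sqrt{h_n}$, and $n^qh_n^{-1/2}\max_i|\Delta_i||Z_i|=o(1)$ a.s.\ by Lemma \ref{L1} with $k=1$, $l=1$, $m=-1/2$, which is admissible precisely because $-1/2>-3/2+2q$, i.e.\ $q<\tfrac12$. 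Collecting the four bounds yields the assertion.

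The step to watch is the $J_{3,i}$ term, where the hypothesis $q<1/2$ is genuinely sharp: since $\Delta_i$ carries only the half-order gain $h_n^{1.5}$ in each $L^k$ norm (cf.~(\ref{P0})), dividing by $\sqrt{h_n}$ and amplifying by $n^q$ leaves exactly the margin $n^{q-1/2}$ up to the slowly growing maxima, whereas all the other contributions vanish with room to spare. No martingale or fluctuation argument is needed here, in contrast to Lemma \ref{L3}, because only a pointwise-in-$i$ maximal bound is required rather than control of a partial-sum process.
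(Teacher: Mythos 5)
Your proposal is correct and follows essentially the route the paper intends: the paper's own proof of Lemma \ref{L4} simply says to repeat the arguments used for (\ref{L3.3}), i.e.\ apply the decomposition (\ref{L3.1}) termwise with the rates from {\bf A0} and the maximal bounds of Lemma \ref{L1}, which is exactly what you do (in considerably more detail than the paper). The only quibble is your closing remark that the other terms vanish ``with room to spare'': the $(\sigma_0^2/\hat{\sigma}_n^2-1)Z_i^2$ and $J_{1,i}$ terms also carry the margin $n^{q-1/2}$ up to slowly growing maxima, so they too require $q<1/2$; only $J_{2,i}$ is genuinely slack.
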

\begin{proof}
Following the similar arguments in the proof of (\ref{L3.3}), the lemma can be obtained and thus we omit its proof.
\end{proof}

\begin{lm}\label{L5}
Suppose that {\bf A0}-{\bf A4} hold. If $nh_n^2\rightarrow 0$, then
\begin{eqnarray*}
\hat{\tau}_{j,n}=\frac{1}{n}\sum_{i=1}^nf_{j,M}^2(\hat{Z}_{i}^2)-\big(\frac{1}{n}\sum_{i=1}^nf_{j,M}(\hat{Z}_{i}^2)\big)^2 \stackrel{P}{\longrightarrow} Var (f_{j,M}(Z_1^2) ).
\end{eqnarray*}
\end{lm}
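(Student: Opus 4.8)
The plan is to prove that the two sample moments $\frac{1}{n}\sum_{i=1}^n f_{j,M}(\hat{Z}_i^2)$ and $\frac{1}{n}\sum_{i=1}^n f_{j,M}^2(\hat{Z}_i^2)$ converge in probability to $E f_{j,M}(Z_1^2)$ and $E f_{j,M}^2(Z_1^2)$, respectively; since $\hat{\tau}_{j,n}$ is the difference of the second moment and the square of the first, and the map $(a,b)\mapsto b-a^2$ is continuous, the asserted convergence to $\mathrm{Var}(f_{j,M}(Z_1^2))$ then follows. The argument proceeds in two steps: first replace the residuals $\hat{Z}_i$ by the idealized i.i.d.\ quantities $Z_i$, and then apply the ergodic law of large numbers in Lemma \ref{L2}.

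\textbf{Step 1 (passing from $\hat{Z}_i$ to $Z_i$).} Each trimming function $f_{j,M}$ is piecewise linear with slopes in $\{-1,0,1\}$, hence $1$-Lipschitz, so $|f_{j,M}(\hat{Z}_i^2)-f_{j,M}(Z_i^2)|\le|\hat{Z}_i^2-Z_i^2|$; moreover $0\le f_{j,M}\le M$ gives $|f_{j,M}^2(\hat{Z}_i^2)-f_{j,M}^2(Z_i^2)|\le 2M\,|\hat{Z}_i^2-Z_i^2|$. Averaging and bounding each sum by $n$ times its maximal term,
\begin{eqnarray*}
\Big|\frac{1}{n}\sum_{i=1}^n f_{j,M}(\hat{Z}_i^2)-\frac{1}{n}\sum_{i=1}^n f_{j,M}(Z_i^2)\Big| &\le& \frac{1}{n}\sum_{i=1}^n|\hat{Z}_i^2-Z_i^2| \le \max_{1\le i\le n}|\hat{Z}_i^2-Z_i^2|,\\
\Big|\frac{1}{n}\sum_{i=1}^n f_{j,M}^2(\hat{Z}_i^2)-\frac{1}{n}\sum_{i=1}^n f_{j,M}^2(Z_i^2)\Big| &\le& \frac{2M}{n}\sum_{i=1}^n|\hat{Z}_i^2-Z_i^2| \le 2M\max_{1\le i\le n}|\hat{Z}_i^2-Z_i^2|,
\end{eqnarray*}
and both right-hand sides are $o_P(1)$ by Lemma \ref{L4} with $q=0$.

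\textbf{Step 2 (ergodic averaging).} The functions $z\mapsto f_{j,M}(z^2)$ and $z\mapsto f_{j,M}^2(z^2)$ are bounded (hence lie in $\mathcal{P}$), do not depend on $n$, and are dominated by a constant; thus Lemma \ref{L2} applies with $f\equiv 1$ (so $\pa_x f\equiv 0\in\mathcal{P}$ and $\int f\,d\mu_0=1$), giving
\begin{eqnarray*}
\frac{1}{n}\sum_{i=1}^n f_{j,M}(Z_i^2)\ \stackrel{a.s.}{\longrightarrow}\ E f_{j,M}(Z_1^2),\qquad \frac{1}{n}\sum_{i=1}^n f_{j,M}^2(Z_i^2)\ \stackrel{a.s.}{\longrightarrow}\ E f_{j,M}^2(Z_1^2).
\end{eqnarray*}
Combining with Step 1 yields $\frac{1}{n}\sum_{i=1}^n f_{j,M}(\hat{Z}_i^2)\stackrel{P}{\to} E f_{j,M}(Z_1^2)$ and $\frac{1}{n}\sum_{i=1}^n f_{j,M}^2(\hat{Z}_i^2)\stackrel{P}{\to} E f_{j,M}^2(Z_1^2)$, whence $\hat{\tau}_{j,n}\stackrel{P}{\to} E f_{j,M}^2(Z_1^2)-\big(E f_{j,M}(Z_1^2)\big)^2=\mathrm{Var}(f_{j,M}(Z_1^2))$.

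The only delicate ingredient is the uniform control $\max_{1\le i\le n}|\hat{Z}_i^2-Z_i^2|=o_P(1)$ invoked in Step 1, but this is precisely the content of Lemma \ref{L4} (whose proof follows the $J_{1,i},J_{2,i},J_{3,i}$ decomposition used for Lemma \ref{L3}); everything else reduces to the boundedness and $1$-Lipschitz property of $f_{j,M}$ together with the already-established ergodic and martingale estimates. In particular, no partial-sum (CUSUM-type) machinery is needed here, since only ordinary sample averages, not their fluctuations, enter $\hat{\tau}_{j,n}$.
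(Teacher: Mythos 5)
Your proposal is correct and follows essentially the same route as the paper: the $1$-Lipschitz property of $f_{j,M}$ reduces the replacement of $\hat{Z}_i^2$ by $Z_i^2$ to the uniform bound $\max_{1\le i\le n}|Z_i^2-\hat{Z}_i^2|=o_P(1)$ from Lemma \ref{L4}, and the ergodic averaging of $f_{j,M}(Z_i^2)$ and $f_{j,M}^2(Z_i^2)$ is handled by Lemma \ref{L2}, exactly as in the paper's (much terser) argument. Your write-up merely makes explicit the continuous-mapping step and the $2M$-Lipschitz constant for $f_{j,M}^2$, which the paper leaves implicit.
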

\begin{proof}
Note that $|f_{j,M}(x)-f_{j,M}(y)| \leq |x-y|$ for $\forall x,y>0$. Then, we have
\[ \max\Big\{ \frac{1}{n}\sum_{i=1}^n |f_{j,M}(Z_i^2)-f_{j,M}(\hat{Z}_i^2)|,  \frac{1}{n}\sum_{i=1}^n |f_{j,M}^2(Z_i^2)-f_{j,M}^2(\hat{Z}_i^2)|\Big\} \lesssim \max_{1\leq i\leq n} |Z_i^2 -\hat{Z}_i^2|,\]
which together with Lemma \ref{L2} and \ref{L4} yields the lemma.
\end{proof}

\noindent{\bf Proof of Theorem \ref{T2}}\\
In this proof, we will only deal with the case of $f_{2,M}$ because the case of $f_{1,M}$ can be verified following essentially the same arguments below.

Since $f_{2,M}(Z_1^2), \cdots, f_{2,M}(Z_n^2)$ are i.i.d. random variables, it follows from
the invariance principle and the mapping theorem that
\begin{eqnarray*}
\frac{1}{\sqrt{n}\tau} \max_{1\leq k\leq n } \Big|\sum_{i=1}^k f_{2,M}(Z_i^2) -\frac{k}{n}\sum_{i=1}^nf_{2,M}(Z_i^2)\Big|
\stackrel{d}{\longrightarrow}\sup_{0\leq t \leq 1} |W^0_t|,
\end{eqnarray*}
where $\tau$ denotes the variance of $f_{2,M}(Z_1^2)$. It is therefore sufficient to show that
\begin{eqnarray}\label{T2.1}
\frac{1}{\sqrt{n}}\max_{1\leq k\leq n } \Big|\sum_{i=1}^k H_{i,M} -\frac{k}{n}\sum_{i=1}^n H_{i,M} \Big|=o_P(1),
\end{eqnarray}
where $H_{i,M}=f_{2,M}(Z_i^2)-f_{2,M}(\hat{Z}_i^2)$.

\noindent Let $\epsilon_n=\max_{1\leq k\leq n } |Z_i^2-\hat{Z}_i^2|$, $S_n=[0,2M+1/n^q]$ for some $q\in(0,0.5)$ and
\[I_{n,k}=\sum_{i=1}^k 1_{S_n}(Z_i^2) (Z_i^2-\hat{Z}_i^2) -\frac{k}{n}\sum_{i=1}^n 1_{S_n}(Z_i^2)(Z_i^2-\hat{Z}_i^2).\]
Then, due to Lemma \ref{L3} and \ref{L4}, we have that
 for any $\epsilon>0$,
\begin{eqnarray*}
&&P\Big( \frac{1}{\sqrt{n}}\max_{1\leq k\leq n } \Big|\sum_{i=1}^k H_{i,M} -\frac{k}{n}\sum_{i=1}^n H_{i,M} \Big|>\epsilon\Big)\\
&\leq&
P\Big(\epsilon_n\geq\frac{1}{n^q} \Big)
+ P\Big(\frac{1}{\sqrt{n}}\max_{1\leq k\leq n }\big| I_{n,k}\big|>\frac{\epsilon}{2} \Big)+
P\Big(\epsilon_n<\frac{1}{n^q}, \frac{1}{\sqrt{n}}\max_{1\leq k\leq n } \Big|I_{n,k}-\sum_{i=1}^k H_{i,M} +\frac{k}{n}\sum_{i=1}^n H_{i,M}\Big| >\frac{\epsilon}{2} \Big)\\
&\leq& P\Big(\epsilon_n<\frac{1}{n^q}, \frac{1}{\sqrt{n}}\max_{1\leq k\leq n } \Big|I_{n,k}-\sum_{i=1}^k H_{i,M} +\frac{k}{n}\sum_{i=1}^n H_{i,M}\Big| >\frac{\epsilon}{2} \Big)+o(1).
\end{eqnarray*}
Observe that on $(n^q\epsilon_n<1)$,
\begin{eqnarray*}
\left\{\begin{array}{ll}
H_{i,M}=Z_i^2-\hat{Z}_i^2 &, if\ Z_i^2 \in \Lambda_{1,n}:=[0,M-1/n^q]\\
H_{i,M}=\hat{Z}_i^2-Z_i^2 &, if\ Z_i^2 \in \Lambda_{2,n}:=[M+1/n^q,2M-1/n^q]\\
H_{i,M}\leq |Z_i^2-\hat{Z}_i^2| &, if\ Z_i^2 \in \Lambda_{3,n}:=S_n-(\Lambda_{1,n}\cup \Lambda_{2,n})\\
H_{i,M}=0 &, if\ Z_i^2\in S_n^c.
\end{array}
\right.
\end{eqnarray*}
Then, we have that on $(n^q\epsilon_n<1)$,
\begin{eqnarray*}
&&\Big|I_{n,k}-\sum_{i=1}^k H_{i,M} +\frac{k}{n}\sum_{i=1}^n H_{i,M}\Big|\\
&\leq& 2\Big|\sum_{i=1}^k 1_{\Lambda_{2,n}}(Z_i^2) (Z_i^2-\hat{Z}_i^2) -\frac{k}{n}\sum_{i=1}^n 1_{\Lambda_{2,n}}(Z_i^2)(Z_i^2-\hat{Z}_i^2)\Big|
+2\sum_{i=1}^n 1_{\Lambda_{3,n}}(Z_i^2)\big|Z_i^2-\hat{Z}_i^2- H_{i,M}\big|.
\end{eqnarray*}
Since the first term of the RHS above converges to zero in probability by Lemma \ref{L3}, the theorem is established  if we verify that
\begin{eqnarray}\label{P4}
\frac{1}{\sqrt{n}}\sum_{i=1}^n 1_{\Lambda_{3,n}}(Z_i^2)\big|Z_i^2-\hat{Z}_i^2- H_{i,M}\big|=o_P(1).
\end{eqnarray}
To prove (\ref{P4}), denote by $n_B$ the number of $Z_i^2$'s belonging to $\Lambda_{3,n}$ and let $\delta_n=P(Z_1^2 \in \Lambda_{3,n})$. Then,
$n_B$ becomes a random variable from $B(n,\delta_n)$. Using the fact that $\sqrt{a+x}-\sqrt{a-x} =O(x/\sqrt{a})$ as $x\rightarrow 0$, it can be readily seen that
\begin{eqnarray}\label{P5}
\delta_n \lesssim 2\Big(\frac{1}{\sqrt{M}}+\frac{1}{\sqrt{2M}}\Big)\frac{1}{n^q}.
\end{eqnarray}
Now, take a triangular array of random varibles $\{X_{ni}| n\geq 1, 1\leq i \leq n\}$ defined on a new probability space $(\Omega', \mathcal{F}', P')$ such that $X_{n1}, X_{n2},\cdots, X_{nn}$ are i.i.d. Bernoulli random variables  with success probability $\delta_n$, which is possible due to Theorem 5.3 of Billingsley (1995).
Letting $r_n=n_B/n-\delta_n$, it follows from the law of the iterated logarithm that
\begin{eqnarray*}
P\Big(\overline{\lim_{n}} \frac{1}{\sqrt{2n\log\log n}}\frac{n|r_n|}{\sqrt{\delta_n(1-\delta_n)}}>1\Big) =P'\Big(\overline{\lim_{n}} \frac{1}{\sqrt{2n\log\log n}}\Big|\sum_{i=1}^n \frac{X_{ni}-\delta_n}{\sqrt{\delta_n(1-\delta_n)}}\Big|>1\Big)=0,
\end{eqnarray*}
and thus, by (\ref{P5}), we have
\[ r_n=O\big( n^{q/2-1/2}\sqrt{\log\log n}\big)\quad a.s.\]
Hence, in view of Lemma \ref{L4}, we have that
\begin{eqnarray*}
\frac{1}{\sqrt{n}} \sum_{i=1}^n 1_{\Lambda_{3,n}}(Z_i^2)\big|(Z_i^2-\hat{Z}_i^2) - H_{i,M}\big| \leq 2\frac{n_B}{\sqrt{n}}\epsilon_n
&\lesssim& \sqrt{n}r_n\epsilon_n +\sqrt{n}\delta_n\epsilon_n\\
&\lesssim& \frac{\sqrt{\log\log n}}{n^{q/2}} n^{q} \epsilon_n +n^{1/2-q}\epsilon_n=o_P(1),
\end{eqnarray*}
which establish (\ref{P4}), and thus the proof is completed.
\hfill{$\Box$}\vspace{0.2cm}\\

\noindent{\bf Acknowledgments}\\
This research was supported by Basic Science Research Program through the National Research Foundation of Korea (NRF) funded by the Ministry of Science, ICT and Future Planning (NRF-2016R1C1B1015963).

\end{document}